\documentclass [12pt,a4paper] {amsart}

\usepackage[utf8]{inputenc}
\usepackage[T1]{fontenc}
\usepackage{lmodern}
\usepackage{microtype}

\usepackage {amssymb}
\usepackage {amsmath}
\usepackage {amsthm}
\usepackage{mathtools}
\usepackage{xfrac}

\usepackage{url}
 
\usepackage[titletoc,toc,title]{appendix}

\usepackage{enumitem}
\usepackage{hyperref}
\hypersetup{
    colorlinks=true,
    linkcolor=blue,
    filecolor=magenta,      
    urlcolor=cyan,
    citecolor=magenta
}
\usepackage{cleveref}

\usepackage{tikz}
\usetikzlibrary{arrows}

\DeclareMathOperator {\im} {Im}
\DeclareMathOperator {\pr} {pr}

\DeclareMathOperator {\seq} {\subseteq}
\DeclareMathOperator {\C} {\mathbb{C}}
\DeclareMathOperator {\R} {\mathbb{R}}

\DeclareMathOperator {\Z} {\mathbb{Z}}

\newcommand{\s}{\mathbb{S}}

\newcommand{\zbar}{\ensuremath{\mathbf{z}}}
\newcommand{\wbar}{\ensuremath{\mathbf{w}}}

\newcommand{\re}{\mathrm{Re}}
\newcommand{\Cx}{\mathbb{C}^{\times}}

\newcommand{\pbar}{{\ensuremath{\bar{p}}}}
\newcommand{\qbar}{{\ensuremath{\bar{q}}}}

\theoremstyle {plain}
\newtheorem {theorem}{Theorem}[section]

\newtheorem {lemma} [theorem] {Lemma}
\newtheorem{fact}[theorem]{Fact}
\newtheorem {proposition} [theorem] {Proposition}
\newtheorem {corollary} [theorem] {Corollary}

\newenvironment{customthm}[1]
  {\innercustomthm}
  {\endinnercustomthm}

\newenvironment{customprop}[1]
  {\innercustomprop}
  {\endinnercustomprop}

\theoremstyle {definition}

\newtheorem {definition}[theorem]{Definition}
\newtheorem{example} [theorem] {Example}

\theoremstyle {remark}
\newtheorem {remark} [theorem] {Remark}

\addtolength{\textwidth}{1in}
\calclayout

\title{Complex solutions of polynomial equations on the unit circle}
\author{Vahagn Aslanyan}

\email{Vahagn.Aslanyan@manchester.ac.uk}
\address{Department of Mathematics, University of Manchester, Manchester, UK}

\begin{document}

\vspace*{-3cm}

\thanks{This work was supported by EPSRC Open Fellowship EP/X009823/1. For the purpose of open access, the author has applied a Creative Commons Attribution (CC BY) licence to any Author Accepted Manuscript version arising from this submission.}

\keywords{Algebraic maps, automorphisms of the unit circle, Manin-Mumford.}

\subjclass[2020]{14H05, 32A08, 14P05}

\maketitle

\begin{abstract}
    We explore systems of polynomial equations where we seek complex solutions with absolute value 1. Geometrically, this amounts to understanding intersections of algebraic varieties with tori -- Cartesian powers of the unit circle. We study the properties of varieties in which this intersection is Zariski dense, give a criterion for Zariski density and use it to show that the problem is decidable. This problem is a ``continuous'' analogue of the Manin-Mumford conjecture for the multiplicative group of complex numbers, however, the results are very different from Manin-Mumford. 
    
    While the results of the paper appear to be new, the proofs are quite elementary. This is an expository article aiming to introduce some classical mathematical topics to a general audience. We also list some exercises and problems at the end for the curious reader to further explore these topics.
\end{abstract}

\section{Introduction}

Finding solutions of (systems of) equations, or understanding when there is one, is a fundamental problem in mathematics. When asking whether a given equation has a solution, one must always specify the space where solutions are sought. For instance, it does not make sense to ask whether the equation $x^2+1=0$ has a solution. We can ask if this equation has a real solution, and the answer would be ``No''. But if we ask whether it has a complex solution then the answer would be ``Yes''. 

Let us focus on polynomial equations or systems thereof, i.e. (systems of) equations of the form $p(z_1,\ldots,z_n)=0$ where $p(z_1,\ldots,z_n)$ is a polynomial with complex coefficients. The Fundamental Theorem of Algebra states that any non-constant polynomial in a single variable has a complex zero. Hilbert's Nullstellensatz extends this theorem providing a criterion for the existence of complex solutions of systems of polynomial equations in several variables.

Often we are interested in \textit{special} types of solutions. For instance, in number theory a classical problem asks when a polynomial equation has a rational or integral solution. For instance, one can prove easily that the equation $x^2+y^2=1$ has infinitely many rational solutions, e.g. $\left( \frac{3}{5}, \frac{4}{5}\right)$. On the other hand, by a celebrated theorem of Andrew Wiles, for $n>2$ the equation $x^n+y^n=1$ has no non-trivial rational solutions with the trivial solutions being $(0, \pm 1)$ and $(\pm 1, 0)$ (this is known as Fermat's last theorem). In these examples the special solutions are the rational ones. Polynomial equations where one is interested in rational or integral solutions are known as \textit{Diophantine equations}, named after the third century Hellenistic mathematician Diophantus of Alexandria. In general it is not possible to decide if a given Diophantine equation has a rational solution, but in many cases it is possible to tell whether an equation has infinitely many such solutions or not. A renowned result in this direction is Faltings's theorem (also known as Mordell's conjecture) stating that under a certain geometric condition, a polynomial equation in two variables can have only finitely many rational solutions. 

Often we are interested in other types of special solutions of arithmetic importance, such as solutions in roots of unity. By extension, such equations are also referred to as Diophantine equations. As an example, let us consider an equation $p(z,w)=0$ where $p$ is a polynomial with complex coefficients and ask when it has infinitely many solutions with both $z$ and $w$ complex roots of unity. For instance, the equation $z^2w=1$ has infinitely many such solutions, for if $\zeta$ is a root of unity then so is $\zeta^{-2}$ and the pair $(\zeta, \zeta^{-2})$ is a solution to the equation. On the other hand, the equation $z+w=2$ has only one solution in roots of unity, namely, $z=w=1$. To see this observe that if $|z|=|w|=1$ then by the triangle inequality $|z+w|\leq |z|+|w|=2$ and equality holds if and only if $w=c z$ for some non-negative real number $c$. But then $c$ must be $1$, hence $z=w=1$. It turns out that more generally only ``multiplicative'' equations can have infinitely many solutions in roots of unity.

\begin{fact}[Ihara, Serre, Tate, {\cite{Zannier-book-unlikely,Pila-ZP-book}}]\label{fact: Ihare-Serre-Tate}
        Let $f$ be an irreducible polynomial. Assume the equation $f(x,y)=0$ has infinitely many solutions $(\xi, \eta)$ whose coordinates are roots of unity. Then up to multiplication by a constant $f$ is of the form $x^my^n-\zeta$ where $m, n \in \Z$ and $\zeta$ is a root of unity. 
\end{fact}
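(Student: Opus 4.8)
The plan is to give the classical elementary proof of this Manin--Mumford statement for $\gm^2$: reduce to coefficients in $\Q$, then use Galois conjugation together with B\'ezout's theorem to force a divisibility $f(x,y)\mid f(x^{\ell},y^{\ell})$ for some $\ell\geq2$, and finally read off the monomial shape of $f$ from this divisibility via Puiseux series. For the reduction: since $f$ is irreducible, $V(f)\subseteq\mathbb{A}^2$ is an irreducible curve, so a proper Zariski closed subset of it is finite, and hence the roots-of-unity solutions are Zariski dense in $V(f)$. They lie in $\overline{\Q}^2$ and are permuted by $\Aut(\C/\Q)$, so $V(f)$ is $\Aut(\C/\Q)$-stable, hence defined over $\Q$; being geometrically irreducible it is cut out by an absolutely irreducible $g\in\Q[x,y]$ with $f=cg$, $c\in\C^{\times}$. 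So I may assume $f\in\Q[x,y]$, absolutely irreducible, of degree $d\geq1$.

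For the main step, let $(\xi,\eta)\in V(f)$ with $\xi,\eta$ roots of unity of orders $n_1,n_2$ and $N=\mathrm{lcm}(n_1,n_2)$, so $\Q(\xi,\eta)=\Q(\zeta_N)$ and the Galois conjugates of $(\xi,\eta)$ over $\Q$ are precisely the $\phi(N)$ distinct points $(\xi^a,\eta^a)$ with $a\in(\Z/N\Z)^{\times}$; all of these lie on $V(f)$. For a prime $p\nmid N$ the point $(\xi^{ap},\eta^{ap})$ is again such a conjugate, so every $(\xi^a,\eta^a)$ lies on the curve $V(f(x^p,y^p))$ as well. Since $f$ is irreducible, either $f(x,y)\mid f(x^p,y^p)$, or the two curves share no component and hence meet in at most $d\cdot pd$ points by B\'ezout. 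Choosing $p$ to be the least prime not dividing $N$ --- so $p=O(\log N)$, whereas $\phi(N)\geq\sqrt N$ for $N$ large --- forces $\phi(N)>pd^2$ once $N$ is large, hence $f(x,y)\mid f(x^p,y^p)$; as there are infinitely many solutions, $N$ is unbounded, so $f(x,y)\mid f(x^{\ell},y^{\ell})$ for some prime $\ell\geq2$.

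Finally, view $f$ as a polynomial in $x$ over $\C(y)$ (the cases $x\mid f$, $y\mid f$, $\deg_x f=0$ are immediate) and factor $f=a(y)\prod_i(x-\rho_i(y))$ with each $\rho_i(y)$ a Puiseux series, $\rho_i\in\C((y^{1/n}))$ for a common $n$ (Puiseux's theorem). The divisibility $f\mid f(x^{\ell},y^{\ell})$ says each $\rho_i(y)$ is an $\ell$-th root of some $\rho_j(y^{\ell})$, i.e.\ $\rho_i(y)^{\ell}=\rho_{\sigma(i)}(y^{\ell})$ for a self-map $\sigma$ of the index set; iterating along the eventually periodic orbits of $\sigma$ yields $\rho_i(y)^{m}=\rho_i(y^{m})$ for some $m=\ell^{r}\geq2$, and comparing Puiseux coefficients order by order forces $\rho_i(y)=\zeta_i\,y^{t_i}$ with $\zeta_i$ a root of unity and $t_i\in\Q$ (the remaining indices then follow by descending induction). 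Irreducibility of $f$ over $\C(y)$ makes the $\rho_i$ a single Galois orbit, so they are exactly the $q=\deg_x f$ conjugate branches of a relation $x^q=\zeta^{q}y^{p}$ with $\gcd(p,q)=1$; multiplying out and using irreducibility once more to pin down $a(y)$ up to a monomial, we find that $f$ equals, up to a unit $c\,x^iy^j$ of $\C[x^{\pm1},y^{\pm1}]$, a binomial $x^my^n-\zeta$ with $m,n\in\Z$ and $\zeta$ a root of unity --- which is how ``up to multiplication by a constant'' should be read here (e.g.\ $V(y-x^2)\cap\gm^2$ is the subtorus $\{x^{-2}y=1\}$). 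The routine parts are the reduction and the B\'ezout count; the step needing real care is the bookkeeping with the branch permutation $\sigma$, and in particular tracking which Puiseux exponents may be fractional. Alternatively one can finish at a stroke, at the cost of a far heavier input, by citing the Bogomolov-type theorem of Zhang for subvarieties of $\gm^n$: a curve through infinitely many torsion points carries points of arbitrarily small Weil height, hence is a finite union of torsion cosets of subtori, hence a single one.
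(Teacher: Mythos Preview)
The paper does not prove this statement; it is recorded as a Fact with references to \cite{Zannier-book-unlikely,Pila-ZP-book}, so there is nothing to compare your argument against. What you have written is the classical Ihara--Serre--Tate/Lang proof, and the B\'ezout count and the Puiseux analysis are essentially correct as sketched.

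There is, however, a genuine error in your reduction step. You argue that since the torsion solutions lie in $\overline{\Q}^{\,2}$ and roots of unity are permuted by $\Aut(\C/\Q)$, the set $S$ of such solutions is $\Aut(\C/\Q)$-stable and hence $V(f)$ is defined over $\Q$. But $\sigma\in\Aut(\C/\Q)$ carries $S=V(f)\cap(\text{roots of unity})^2$ into $V(f^{\sigma})\cap(\text{roots of unity})^2$, and there is no reason this should coincide with $S$ before you already know $f^{\sigma}$ and $f$ cut out the same curve. A concrete counterexample: $f(x,y)=xy-i$ is irreducible with infinitely many torsion solutions $(\zeta,\,i\zeta^{-1})$, yet $V(f)$ is not defined over $\Q$, since complex conjugation sends it to the distinct curve $xy=-i$. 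The valid reduction is only to $\overline{\Q}$: because $S\subseteq\overline{\Q}^{\,2}$ is Zariski dense in $V(f)$, the curve is defined over some number field $K$, and one replaces $f$ by an absolutely irreducible $g\in K[x,y]$. In the Galois step you must then use $\mathrm{Gal}(K(\zeta_N)/K)$, whose image in $(\Z/N\Z)^{\times}$ has index at most $[K:\Q]$; this still produces at least $\phi(N)/[K:\Q]$ conjugate torsion points on the curve, and since $[K:\Q]$ is a fixed constant the comparison with the B\'ezout bound $pd^2$ (with $p=O(\log N)$) goes through unchanged. With this fix the remainder of your argument stands.
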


Note that when $m$ or $n$ is negative then $x^my^n-\zeta$ is not a polynomial. However, it is a \textit{Laurent polynomial} where negative powers of the indeterminates may occur. When we work in the multiplicative group of non-zero complex numbers, it is often more convenient to work with Laurent polynomials which we will do. Nevertheless, one can always multiply a Laurent polynomial by a suitable monomial to get rid of all negative powers of the indeterminates and work with classical polynomials.

There is also a generalisation of Fact \ref{fact: Ihare-Serre-Tate} to systems of equations in several variables. It is convenient to state it in geometric terms, where instead of an equation or a system of equations we consider a geometric object -- an \textit{algebraic variety} defined by these equations. Examples of algebraic varieties are curves and surfaces. Then a solution of the system of equations under consideration is just a point on the corresponding variety. In this language, Fact \ref{fact: Ihare-Serre-Tate} states that a plane curve contains infinitely many points with both coordinates roots of unity if and only if the curve is defined by an equation of the form $x^my^n=\zeta$.

In higher dimensions we have an algebraic variety $V\seq \C^n$ for some $n\geq 2$ and we want to understand the set of points in $V$ all of whose coordinates are roots of unity. Here the question is not whether $V$ contains infinitely many such points; that is too weak of a condition to characterise $V$. For example, the variety in $\C^3$ defined by the equation $xy+z=2$ contains the infinite set $\{ (\zeta, \zeta^{-1},1): \zeta \text{ is a root of unity} \}$ but it is not of a multiplicative form. Instead, one asks whether $V$ contains a \textit{Zariski dense} subset of points whose coordinates are roots of unity. Recall that a subset of a variety $V$ is said to be \textit{Zariski dense} in $V$ if $V$ is the smallest algebraic variety containing that set. In other words, a subset is Zariski dense in $V$ if every polynomial that vanishes at all points of that subset also vanishes on the whole $V$.

\begin{fact}[Manin-Mumford conjecture; Laurent {\cite{laurent,Pila-ZP-book}}]\label{fact: Manin-Mumford}
    Let $V\seq \C^n$ be an irreducible algebraic variety containing a Zariski dense set of torsion points of $(\Cx)^n$. Then $V$ is defined by (one or more) equations of the form $z_1^{a_1}\cdots z_n^{a_n} = \zeta$ where $a_k \in \Z$ and $\zeta$ is a root of unity.
\end{fact}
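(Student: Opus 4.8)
Since the torsion points of $(\Cx)^n$ have algebraic coordinates and form a Zariski dense subset of $V$, the variety $V$ is defined over $\overline{\Q}$; I would argue by induction on $n$. Two degenerate cases are immediate: if $\dim V = n$ then $V = (\Cx)^n$ (cut out by the trivial relation $z_1^0\cdots z_n^0 = 1$), and if $\dim V = 0$ then $V$ is a single point which, being the Zariski closure of torsion points, is itself a torsion point and is defined by equations $z_i = \zeta_i$. So assume $1 \le \dim V \le n-1$ and $n \ge 2$. The plan is to first exhibit one nontrivial monomial relation $z^k = 1$ holding on all of $V$; this relation then places $V$, after a monomial change of coordinates, inside a subtorus isomorphic to $(\Cx)^{n-1}$, and the inductive hypothesis finishes the job.

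To get the relation, first reduce to integer coefficients. Choose any nonzero $f \in I(V)$ whose coefficients lie in a number field $K$ (possible because $\dim V < n$, using that $V$ is defined over $\overline{\Q}$), and replace $f$ by a nonzero integer multiple of the norm $\prod_{\tau\colon K\hookrightarrow\C} f^\tau$; this is a nonzero Laurent polynomial $f = \sum_{m\in M} c_m z^m$ with $M \seq \Z^n$ finite, $c_m \in \Z\setminus\{0\}$, still vanishing on $V$ (it has $f$ as a factor). Now fix a torsion point $\zeta \in V$. If $\zeta^{m-m'} = 1$ for some distinct $m, m' \in M$, that is already a nontrivial monomial relation satisfied by $\zeta$. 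Otherwise the roots of unity $\zeta^m$, $m \in M$, are pairwise distinct and $\sum_{m\in M} c_m \zeta^m = 0$; picking an inclusion-minimal nonempty $I \seq M$ with $\sum_{m\in I} c_m \zeta^m = 0$ (necessarily $|I| \ge 2$), Mann's theorem on vanishing sums of roots of unity bounds the order of each ratio $\zeta^{m-m'}$, $m, m' \in I$, by $r := \prod_{p\le |M|,\ p\text{ prime}} p$, so that $\zeta^{r(m-m')} = 1$ for distinct $m, m' \in I$. In either case $\zeta^k = 1$ for some $k$ in the finite set $S_f := \{\, m - m',\ r(m-m') \ :\ m, m' \in M,\ m \ne m' \,\} \seq \Z^n \setminus \{0\}$, which depends only on $f$.

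Partition the torsion points of $V$ according to which $k \in S_f$ witnesses the relation. Since $S_f$ is finite and the torsion points are Zariski dense in the irreducible variety $V$, one class is already Zariski dense, and therefore $z^k = 1$ holds on all of $V$ for some $k \ne 0$. Write $k = d\ell$ with $\ell \in \Z^n$ primitive; then $\{z^k = 1\}$ is a finite disjoint union of torsion cosets of the $(n-1)$-dimensional subtorus $\{z^\ell = 1\}$, and since $V$ is irreducible, $V \seq \{z^\ell = \eta\}$ for a single root of unity $\eta$. Completing $\ell$ to a $\Z$-basis of $\Z^n$ gives a monomial automorphism $\phi$ of $(\Cx)^n$ carrying $\{z^\ell = \eta\}$ onto $\{w_n = \eta\} \cong (\Cx)^{n-1}$; as $\phi$ is a group automorphism, it maps torsion points bijectively to torsion points, so $\phi(V)$, viewed inside $(\Cx)^{n-1}$, is irreducible with a Zariski dense set of torsion points. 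By the inductive hypothesis $\phi(V)$ is cut out there by equations $w_1^{a_1}\cdots w_{n-1}^{a_{n-1}} = \theta$; adjoining $w_n = \eta$ and pulling these back through the monomial map $\phi$ exhibits $V$ as the zero set of equations of the same shape, which completes the induction.

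The heart of the matter is the uniformity of the relation-producing step: one needs the witnessing exponent vector $k$ to be bounded in terms of $f$ \emph{alone}, independently of the torsion point $\zeta$, and this is exactly what Mann's theorem provides---without such an input one cannot even rule out varieties like $\{z_1 + z_2 + z_3 = 1\}$. The reduction to integer coefficients is also essential here, because Mann's bound genuinely fails for $\overline{\Q}$-linear relations among roots of unity. An alternative is to replace Mann's theorem either by the finiteness theorem for nondegenerate solutions of $S$-unit equations (Laurent's original argument) or by the Pila--Zannier strategy, comparing the Pila--Wilkie counting bound for rational points on $\exp^{-1}(V)$ with the lower bound $\gg_\varepsilon N^{1-\varepsilon}$ for the number of Galois conjugates of a torsion point of order $N$; in each case the corresponding deep input takes over the role played here by Mann's theorem.
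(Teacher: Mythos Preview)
The paper does not supply its own proof of this statement: Fact~\ref{fact: Manin-Mumford} is quoted as a known theorem with references to Laurent and to Pila's book, and is used only as background motivation in the introduction. So there is no in-paper argument to compare against.

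That said, your sketch is a correct outline of one of the standard proofs of multiplicative Manin--Mumford, the one based on Mann's theorem on vanishing sums of roots of unity. The key steps---descending to $\overline{\Q}$ and then to $\Z$-coefficients via the norm, extracting from Mann's theorem a finite supply $S_f$ of candidate exponent vectors depending only on $f$, pigeonholing to get one monomial relation valid on all of $V$, and then inducting via a monomial change of coordinates---are all in order, and you correctly identify the crucial point: the uniformity of the exponent vector over all torsion points, which is precisely what Mann's bound delivers and which fails over $\overline{\Q}$-coefficients. Your closing remarks about the alternative routes (Laurent's $S$-unit argument, the Pila--Zannier strategy) are also accurate; the first of these is what the paper's citation to \cite{laurent} points to.

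One minor wording issue: in the dichotomy ``if $\zeta^{m-m'}=1$ for some distinct $m,m'\in M$, that is already a nontrivial monomial relation satisfied by $\zeta$'', this relation is satisfied by the particular torsion point $\zeta$, not yet by all of $V$; but your subsequent pigeonhole over the finite set $S_f$ handles this correctly, so the argument is fine as written.
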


The philosophy of Facts \ref{fact: Ihare-Serre-Tate} and \ref{fact: Manin-Mumford} is that an algebraic variety contains few \textit{special points} (points whose coordinates are roots of unity) unless there is a trivial reason for it to contain many such points, in which case it contains as many special points as it possibly can.




\subsection{Main results}

When we showed that $(1,1)$ is the only special solution of the equation $z+w=2$, the only property of roots of unity that we used was that they have absolute value 1. It turns out that in some cases this is the only property of roots of unity that we need to prove finiteness of special solutions. So it makes sense to study solutions of polynomial equations where each coordinate of the solution has absolute value 1. In other words, instead of studying intersections of algebraic varieties with powers of the set of all roots of unity, we study intersections with powers of the unit circle $\s_1$, also known as tori.\footnote{Note that $\s_1$ is the Euclidean closure of the set of all roots of unity in $\C$ and is the maximal compact subgroup of the multiplicative group $(\Cx, \cdot)$.} This question was studied in \cite{corvaja-masser-zannier}, where the authors explore intersections of varieties with the maximal compact subgroup $\Gamma$ of a commutative algebraic group of dimension 2. They prove that for some such groups the conclusion of Manin-Mumford holds under the weaker assumption that the variety merely contains a Zariski dense subset of points from $\Gamma$. They remark that this fails for the multiplicative group of non-zero complex numbers, give some examples to illustrate this and obtain some basic results in dimension 2. In this paper we provide a thorough investigation of this problem in an arbitrary dimension. Some of our ideas and proofs are based on those of  \cite{corvaja-masser-zannier}, while others are somewhat different, albeit still elementary.

\textbf{Caveat.} Our results are written in the language of basic algebraic geometry, e.g. varieties, dimension, Zariski density, etc. We present the necessary preliminaries in the next section. However, a reader who is not familiar with these notions and finds them hard to grasp may assume for simplicity that $n=2$ throughout the paper. That means varieties are just curves defined by polynomial equations of the form $p(z_1,z_2)=0$, the dimension of a curve is 1, and a Zariski dense subset in a curve is simply an infinite subset.

Our first main result is the following criterion for Zariski density.

\begin{theorem}\label{thm: V = graph alg map}
    Let $V \seq \C^n$ be an irreducible algebraic variety of dimension $d$. Assume the projection of $V$ to the first $d$ coordinates has dimension $d$. Then $V\cap \s_1^n$ is Zariski dense in $V$ if and only if there is a holomorphic map $F=(f_1,\ldots,f_{n-d}): U \to \C^{n-d}$ defined on an open set $U \seq (\Cx)^d$ such that 
    \begin{itemize}
        \item $V$ contains the graph of $f$, and 
        \item for all $(z_1,\ldots,z_d)\in \bar{U}\cap U^{-1}$ and for each coordinate function $f_k$ of $F$ we have $\overline{f_k(\bar{z}_1,\ldots,\bar{z}_d)}\cdot f_k(z_1^{-1},\ldots,z_d^{-1})=1$.
    \end{itemize}    
    Moreover, the map $F$ is necessarily algebraic and we may assume $U \cap \s_1^d \neq \emptyset$.
\end{theorem}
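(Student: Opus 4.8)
The guiding observation is that a point $x\in(\Cx)^n$ lies on $\s_1^n$ precisely when $\bar x=x^{-1}$, so that $\s_1^n$ is the fixed locus of the anti-holomorphic involution $\sigma\colon x\mapsto\bar x^{-1}$ of $(\Cx)^n$; the plan is to show that Zariski density of $V\cap\s_1^n$ forces $V$ to be $\sigma$-invariant and then to read off $F$ as a branch adapted to this symmetry. To set up, the hypothesis on the projection says that $\pr\colon V\to\C^d$ onto the first $d$ coordinates is dominant, hence generically finite as $\dim V=d$; fix a Zariski-open dense $\Omega\seq\C^d$, disjoint from the coordinate hyperplanes and from the $\pr$-images of the singular locus of $V$, of the ramification locus, and of the locus of positive-dimensional fibres, and put $V_0:=(\pr|_V)^{-1}(\Omega)$. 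Then $\pr^{-1}(w)\seq V_0$ for every $w\in\Omega$, the variety $V_0$ is smooth, and over any polydisc $D\seq\Omega$ the trivial covering $\pr^{-1}(D)\cap V_0\to D$ is a disjoint union of graphs of holomorphic maps $G_1,\dots,G_m\colon D\to\C^{n-d}$, all contained in $V$. I will also use two elementary facts: $V\cap\s_1^n$ is a semialgebraic subset of $\C^n\cong\R^{2n}$; and $\s_1^d$ is a connected, real-analytic, maximally totally real $d$-dimensional submanifold of $(\Cx)^d$, hence Zariski dense in $\C^d$ and a uniqueness set for holomorphic functions on connected open subsets of $(\Cx)^d$ meeting it.

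\emph{Backward direction} (using $U\cap\s_1^d\neq\emptyset$, without which the second bullet may be vacuous). The points of $\bar U\cap U^{-1}$ lying on $\s_1^d$ are exactly those $z\in\s_1^d$ with $z^{-1}\in U$; for such $z$ one has $\bar z=z^{-1}$, so the second bullet gives $|f_k(z^{-1})|^2=1$, i.e.\ $|f_k|\equiv 1$ on $U\cap\s_1^d$. Hence the graph of $F$ over $U\cap\s_1^d$ lies in $V\cap\s_1^n$; as $U\cap\s_1^d$ is a nonempty relatively open subset of $\s_1^d$ it is Zariski dense in $\C^d$, so the Zariski closure of this graph is a subvariety of $V$ with Zariski-dense $\pr$-image, hence of dimension $d$, hence equal to $V$ by irreducibility. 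Thus $V\cap\s_1^n$ is Zariski dense in $V$.

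\emph{Forward direction, extracting $F$.} Assume $V\cap\s_1^n$ is Zariski dense in $V$. Then $\pr(V\cap\s_1^n)$ is Zariski dense in $\C^d$ (image of a Zariski-dense set under a dominant morphism), is semialgebraic by Tarski--Seidenberg, and lies in $\s_1^d$; since the complex Zariski closure of a semialgebraic set has complex dimension at most its real dimension, $\pr(V\cap\s_1^n)$ has real dimension $d$, so it contains a nonempty relatively open subset of the $d$-manifold $\s_1^d$. Choose a polydisc $D\seq\Omega$ centred at a point of $\s_1^d$ with $\emptyset\neq D\cap\s_1^d\seq\pr(V\cap\s_1^n)$, over which $V_0$ splits into branches $G_1,\dots,G_m$. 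For each $w\in D\cap\s_1^d$ there is a point of $V\cap\s_1^n$ above $w$; since $w\in\Omega$ it lies in $V_0$, so it equals $(w,G_i(w))$ with $G_i(w)\in\s_1^{n-d}$ for some $i$. Therefore $D\cap\s_1^d=\bigcup_{i=1}^m\{w: G_i(w)\in\s_1^{n-d}\}$ is a finite union of closed real-analytic subsets of the connected real-analytic manifold $D\cap\s_1^d$, so by the Baire category theorem one of them, say for $i_0$, is all of $D\cap\s_1^d$. Put $F:=G_{i_0}$ and $U:=D$: its graph lies in $V$, it satisfies $|f_k|\equiv 1$ on $U\cap\s_1^d\neq\emptyset$, and it is algebraic since each $z_{d+k}$ is algebraic over $\C(z_1,\dots,z_d)$ on $V$.

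\emph{Forward direction, the symmetry.} Zariski density forces $\bar V=V^{-1}$ inside $(\Cx)^n$ (with $\bar V$ the conjugate variety and $V^{-1}$ the image of $V$ under inversion): each $x\in V\cap\s_1^n$ has $x^{-1}=\bar x\in\bar V$, so $V$, being the Zariski closure of $V\cap\s_1^n$, lies in the Zariski closure of $\{y^{-1}:y\in\bar V\cap(\Cx)^n\}$, which is irreducible of dimension $d$ and hence equal to $V$; equivalently $\sigma(V)=V$. Now $\widetilde F(z):=\overline{F(\bar z^{-1})}^{-1}$ is holomorphic near $U\cap\s_1^d$ (the substitution $w=\bar z^{-1}$ is anti-holomorphic, so conjugating and then inverting produces a holomorphic function of $z$), and applying $\sigma$ to the graph of $F$ together with $\sigma(V)=V$ shows the graph of $\widetilde F$ also lies in $V$; on $U\cap\s_1^d$ one has $\bar z^{-1}=z$ and $|f_k(z)|=1$, so $\widetilde F=F$ there. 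As $\s_1^d$ is a uniqueness set, $\widetilde F\equiv F$ on a connected neighbourhood, i.e.\ $f_k(z)\,\overline{f_k(\bar z^{-1})}=1$; replacing $z$ by $z^{-1}$ yields $\overline{f_k(\bar z)}\,f_k(z^{-1})=1$ for all $z\in\bar U\cap U^{-1}$, after shrinking $U$ to a small polydisc centred on $\s_1^d$ on which $F$ is nonvanishing, the relevant domains are connected, and $U\seq(\Cx)^d$; this is the second bullet. The two steps needing care are the passage ``semialgebraic $+$ Zariski dense $\Rightarrow$ a relatively open piece of $\s_1^d$'', resting on the real-versus-complex dimension comparison for semialgebraic sets, and the analytic-continuation step, resting on $\s_1^d$ being a set of uniqueness; the remainder is bookkeeping about the cover $V_0\to\Omega$ and about repeatedly shrinking $U$.
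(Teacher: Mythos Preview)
Your proof is correct and follows the same overall architecture as the paper's: split $V$ over a good base into holomorphic branches, locate one branch sending $\s_1^d$ into $\s_1^{n-d}$, and then upgrade this to the functional equation $\overline{f_k(\bar z)}\cdot f_k(z^{-1})=1$. The differences are in the tools you bring to each step, and they are worth recording.

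To locate the good branch, the paper argues purely Zariski-topologically: the set $\bigcup_i\{(\zbar,F_i(\zbar)):\zbar\in U\cap\s_1^d,\ F_i(\zbar)\in\s_1^{n-d}\}$ is Zariski dense in $V$, so one of the finitely many pieces already is, hence $\{\zbar\in U\cap\s_1^d: F_{i_0}(\zbar)\in\s_1^{n-d}\}$ is Zariski dense in $\C^d$. You instead go through Tarski--Seidenberg and the real-versus-complex dimension bound to find an open arc-box in $\s_1^d$, then use Baire plus real-analyticity to pin down a branch with $|f_k|\equiv 1$ on the \emph{whole} of $D\cap\s_1^d$. (Minor point: Baire only gives nonempty interior; you are implicitly using that $|g_{i,k}|^2-1$ is real-analytic on the connected manifold $D\cap\s_1^d$ to pass from ``open'' to ``all''. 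That step is fine but deserves a word.)

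For the functional equation, the paper stays algebraic: it forms $h_k(\zbar)=f_k(\zbar)\,\bar f_k(\zbar^{-1})-1$, notes this is an algebraic function vanishing on a Zariski dense set, and kills it via its minimal polynomial. You instead use that $\s_1^d$ is maximally totally real, hence a uniqueness set, to identify $F$ with $\widetilde F(z)=\overline{F(\bar z^{-1})}^{-1}$. Both work; the paper's route avoids checking that $F$ is nonvanishing and that the relevant neighbourhood is connected, while yours avoids the minimal-polynomial manoeuvre. Note that your detour through $\sigma(V)=V$ is actually unnecessary for your own argument: once $\widetilde F$ is seen to be holomorphic and to agree with $F$ on $U\cap\s_1^d$, the uniqueness-set property alone forces $\widetilde F\equiv F$, regardless of whether its graph sits in $V$. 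The paper proves $V=V^*$ separately (as its Proposition~1.4), not as part of this theorem.
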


Note that when $\dim V = d$, the projection of $V$ to some $d$ coordinates has dimension $d$. By renumbering the coordinates we may assume these are the first $d$ coordinates. So this assumption in the statement of the above theorem does not restrict the generality.

This theorem can be used to establish a dichotomy result: the intersection of an algebraic variety $V\seq \C^n$ with $\s_1^n$ is either small (not Zariski dense) or it has the highest possible dimension. 

\begin{corollary}\label{cor: intersection is large}
    Let $V \seq \C^n$ be an algebraic variety such that $V \cap \s_1^n$ is Zariski dense in $V$. Then the real dimension of $V \cap \s_1^n$ is equal to the complex dimension of $V$.
\end{corollary}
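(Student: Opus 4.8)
The plan is to establish the two inequalities $\dim_\R(V\cap\s_1^n)\ge\dim_\C V$ and $\dim_\R(V\cap\s_1^n)\le\dim_\C V$ separately; write $d=\dim_\C V$.

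For the lower bound I would first reduce to the case where $V$ is irreducible. Let $V_0$ be an irreducible component of $V$ with $\dim V_0=d$, and let $V_1,\ldots,V_m$ be the components of $V$. Since Zariski closure commutes with finite unions, $V=\overline{V\cap\s_1^n}=\bigcup_i\overline{V_i\cap\s_1^n}$, so by irreducibility $V_0\seq\overline{V_i\cap\s_1^n}$ for some $i$; as the right-hand side is contained in $V_i$ this forces $V_0=V_i$, hence $V_0\cap\s_1^n$ is Zariski dense in $V_0$, and since $V_0\cap\s_1^n\seq V\cap\s_1^n$ it suffices to bound $\dim_\R(V_0\cap\s_1^n)$ from below. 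After permuting coordinates we may assume the projection of $V_0$ to the first $d$ coordinates has dimension $d$, so Theorem~\ref{thm: V = graph alg map} applies and provides an (algebraic) holomorphic map $F=(f_1,\ldots,f_{n-d})\colon U\to\C^{n-d}$ on an open $U\seq(\Cx)^d$ with $U\cap\s_1^d\ne\emptyset$, whose graph lies in $V_0$ and which satisfies the displayed functional equation. The key observation is that for $w=(w_1,\ldots,w_d)\in U\cap\s_1^d$ one has $w_j^{-1}=\bar w_j$, so $z=\bar w=w^{-1}$ lies in $\bar U\cap U^{-1}$, and evaluating the functional equation at $z$ gives $\overline{f_k(w)}\cdot f_k(w)=1$, i.e. $|f_k(w)|=1$ for every $k$. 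Consequently the point $(w,F(w))\in V_0$ has all coordinates of absolute value $1$, so $(w,F(w))\in V_0\cap\s_1^n$. The assignment $w\mapsto(w,F(w))$ is a continuous injection of $U\cap\s_1^d$ into $V_0\cap\s_1^n$ whose inverse is the coordinate projection, hence a homeomorphism onto its image; as $U\cap\s_1^d$ is a nonempty open subset of the $d$-manifold $\s_1^d$, its image has dimension $d$, and therefore $\dim_\R(V\cap\s_1^n)\ge\dim_\R(V_0\cap\s_1^n)\ge d$.

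For the upper bound no density hypothesis is needed, and I would prove the general statement $\dim_\R(W\cap\s_1^n)\le\dim_\C W$ for every algebraic variety $W\seq\C^n$, by induction on $\dim W$ (the case $\dim W=0$ being trivial). Stratifying $W$ into smooth locally closed subsets, it is enough to bound $\dim_\R(W'\cap\s_1^n)$ for each smooth stratum $W'$, a complex manifold of complex dimension at most $\dim W$. Choosing a point $p\in W'\cap\s_1^n$ through which this (semialgebraic, hence well-behaved) set contains a smooth cell $C$ of maximal dimension, we have $T_pC\seq T_pW'\cap T_p\s_1^n$; since $T_pW'$ is a complex subspace of $\C^n$ while $T_p\s_1^n$ is totally real (at $p=(e^{i\theta_1},\ldots,e^{i\theta_n})$ its $j$-th coordinate line is $\R\cdot ie^{i\theta_j}$, which meets $i\cdot T_p\s_1^n$ only in $0$), the elementary fact that a real subspace $L:=W''\cap R$ with $W''$ complex and $R\cap iR=0$ satisfies $L\cap iL=0$, whence $L\oplus iL\seq W''$ and $2\dim_\R L\le\dim_\R W''$, shows $\dim_\R(T_pW'\cap T_p\s_1^n)\le\dim_\C W'$. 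Hence $\dim C\le\dim_\C W'\le\dim_\C W$, and combined with the inductive bound on the union of the lower-dimensional strata this gives $\dim_\R(W\cap\s_1^n)\le\dim_\C W$. Applying this to $W=V$ and combining with the lower bound proves the corollary.

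I expect the upper bound to be the main obstacle: the substantive issues are verifying that $V\cap\s_1^n$ and the strata encountered are semialgebraic so that the stratification and tangent-space arguments are legitimate and the various notions of real dimension coincide, together with the ``totally real'' linear-algebra input saying $\s_1^n$ cannot contain a piece of a complex variety of real dimension exceeding its complex dimension. The lower bound, by contrast, is essentially immediate from Theorem~\ref{thm: V = graph alg map} once one notices that on the unit circle inversion coincides with complex conjugation, which collapses the functional equation to $|f_k|=1$.
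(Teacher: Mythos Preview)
Your argument is correct. For the lower bound you do exactly what the paper does: the proof of Theorem~\ref{thm: V = graph alg map} already observes (in its final paragraph) that on $U\cap\s_1^d$ the functional equation collapses to $|f_k|=1$, so the graph of $F$ over $U\cap\s_1^d$ lies inside $V\cap\s_1^n$ and has real dimension $d$. The paper's one-line ``follows immediately from this proof'' refers precisely to this; your reduction to an irreducible component is the natural extra step since the corollary itself does not assume irreducibility.

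Where you go beyond the paper is the upper bound $\dim_\R(V\cap\s_1^n)\le d$, which the paper simply asserts without argument. Your totally-real tangent-space computation is correct and yields the clean general inequality $\dim_\R(W\cap\s_1^n)\le\dim_\C W$ for every variety $W$, with no density hypothesis. A more elementary alternative, closer in spirit to the paper's setup, is to use the dominant projection $\pi\colon V\to\C^d$: over the complement of the ramification locus $R$ the fibres are finite, so $(V\cap\s_1^n)\setminus\pi^{-1}(R)$ maps with finite fibres into $\s_1^d$ and hence has real dimension at most $d$; the remainder lies in the proper subvariety $V\cap\pi^{-1}(R)$ of complex dimension $<d$, and one concludes by induction on $\dim_\C V$. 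This avoids stratifications and the linear-algebra lemma, at the cost of not giving the uniform statement your approach does.
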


As a consequence of Theorem \ref{thm: V = graph alg map} we show that there is an algorithm which, given an algebraic variety $V\seq \C^n$, decides whether it has a Zariski dense intersection with $\s_1^n$. One says in this case that the problem of Zariski density of $V\cap \s_1^n$ in $V$ is \textit{decidable}.

Our next result is about some geometric properties of $V$ related to the Zariski density of $V \cap \s_1^n$ in $V$.

\begin{proposition}\label{prop: V=V* and odd degree}
    Let $V\seq \C^n$ be an irreducible algebraic variety of dimension $d$.
    \begin{itemize}
        \item If $V \cap \s_1^n$ is Zariski dense in $V$ then $\bar{V} = V^{-1}$ where $\, \bar\,$ and $^{-1}$ are applied to each point of $V$ coordinate-wisely.

        \item If $\bar{V} = V^{-1}$, and the projection of $V$ to some $d$ coordinates is dominant and of odd degree, then $V\cap \s_1^n$ is Zariski dense in $V$.
    \end{itemize}
\end{proposition}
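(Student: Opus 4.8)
For the first part the plan is to transport Zariski density along the two coordinate-wise maps $z\mapsto\bar z$ and $z\mapsto z^{-1}$. Put $S=V\cap\s_1^n$; by hypothesis $S$ is Zariski dense in $V$, so in particular $V$ meets $(\Cx)^n$, and since $V$ is irreducible it is not contained in any coordinate hyperplane and $V^{-1}$ is a genuine (irreducible, $d$-dimensional) variety. For every $p\in\s_1^n$ each coordinate satisfies $\bar p_k=p_k^{-1}$, hence $\bar p=p^{-1}$, so $\bar S=S^{-1}$ as subsets of $(\Cx)^n$. Coordinate-wise inversion is an automorphism of the algebraic torus, hence a homeomorphism for the Zariski topology, so it takes the dense subset $S$ of $V$ to a dense subset $S^{-1}$ of $V^{-1}$. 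For conjugation, note that it sends a polynomial $f$ to the polynomial $\bar f$ with conjugated coefficients and that $f(\bar p)=\overline{\bar f(p)}$; hence $f$ vanishes on $\bar S$ iff $\bar f$ vanishes on $S$ iff (density) $\bar f$ vanishes on $V$ iff $f$ vanishes on $\bar V$, which says exactly that $\bar S$ is Zariski dense in $\bar V$. Since $\bar S=S^{-1}$, the varieties $\bar V$ and $V^{-1}$ are both the Zariski closure of this one set, so $\bar V=V^{-1}$.

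For the second part, assume $\bar V=V^{-1}$ and, after renumbering the coordinates, that the projection $\pi$ of $V$ onto the first $d$ coordinates is dominant of odd degree $e$. First, $\bar V=V^{-1}$ forces $V$ to meet $(\Cx)^n$ (else $V^{-1}=\emptyset\neq\bar V$), so by irreducibility $V\cap(\Cx)^n$ is dense in $V$ and $V\setminus(\Cx)^n$ is a proper closed subset, of dimension $<d$. Since $\pi$ is dominant and generically finite, there is a Zariski-dense open set $U_0\subseteq(\Cx)^d$ over which the fibre $F_u:=\{\,w\in\C^{n-d}:(u,w)\in V\,\}$ has exactly $e$ elements, all lying in $(\Cx)^{n-d}$ — the locus where the fibre degenerates or leaves the torus is contained in the images of these bad sets, a proper subvariety of $\C^d$. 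Because $\s_1$ is infinite it is Zariski dense in the curve $\Cx$, and an easy induction gives that $\s_1^d$ is Zariski dense in $(\Cx)^d$; since the complement of $U_0$ lies in a proper subvariety and the polynomial ring is a domain, $\s_1^d\cap U_0$ is still Zariski dense in $(\Cx)^d$.

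The crux is a parity argument on the fibres. Fix $u\in\s_1^d\cap U_0$ and define $\sigma\colon(\Cx)^{n-d}\to(\Cx)^{n-d}$ by $\sigma(w)=\overline{w^{-1}}$ (applied coordinate-wise). If $(u,w)\in V$ then $\overline{(u,w)}=(u^{-1},\bar w)\in\bar V=V^{-1}$, hence $(u,\overline{w^{-1}})=\bigl(\overline{(u,w)}\bigr)^{-1}\in V$; so $\sigma$ maps the finite set $F_u$ to itself, and $\sigma$ is plainly an involution. An involution of a set of odd cardinality has a fixed point $w^{*}\in F_u$, and $\sigma(w^{*})=w^{*}$ means $\overline{w^{*}_k}=(w^{*}_k)^{-1}$, i.e. $|w^{*}_k|=1$, for each $k$; thus $w^{*}\in\s_1^{n-d}$ and $(u,w^{*})\in V\cap\s_1^n$. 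Consequently $\pi(V\cap\s_1^n)\supseteq\s_1^d\cap U_0$, which is Zariski dense in $(\Cx)^d$. If $V\cap\s_1^n$ were not Zariski dense in $V$, its closure $W$ would be a proper closed subvariety, so $\dim W<\dim V=d$ by irreducibility of $V$, and then $\overline{\pi(W)}$, a variety of dimension $<d$, could not contain the Zariski-dense set $\s_1^d\cap U_0$ — a contradiction. Hence $V\cap\s_1^n$ is Zariski dense in $V$.

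I expect the only genuinely delicate point to be the bookkeeping in the second paragraph: guaranteeing that, on a Zariski-dense set of base points $u\in\s_1^d$, the fibre $F_u$ simultaneously has the full set-theoretic cardinality $e$ and stays inside $(\Cx)^{n-d}$, which is exactly what makes the parity argument applicable. Everything else is a direct manipulation of the hypothesis $\bar V=V^{-1}$ together with the standard transfers of Zariski density under the torus automorphism $z\mapsto z^{-1}$ and under complex conjugation.
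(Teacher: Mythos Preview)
Your proof is correct and follows essentially the same approach as the paper: for the first part you use that $\bar p=p^{-1}$ on $\s_1^n$ and take Zariski closures, and for the second part you run the same parity argument, observing that the involution $w\mapsto \overline{w^{-1}}$ preserves the (odd-sized) fibre over a generic $u\in\s_1^d$ and hence has a fixed point in $\s_1^{n-d}$. The paper's write-up is terser (it phrases the first part via the single map $^*=\overline{\phantom{z}}\circ{}^{-1}$ and concludes the second part by noting $\dim_{\R}(V\cap\s_1^n)=d$ rather than projecting), but there is no substantive difference; your extra care in ensuring the generic fibre stays inside $(\Cx)^{n-d}$ is a detail the paper leaves implicit.
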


Thus, the first is a necessary condition and the second is a sufficient criterion for Zariski density. Unfortunately we are unable to provide a nice geometric necessary and sufficient condition, and we argue in Remark \ref{rem: nec and suff} that such a criterion does not exist.

However, in some special cases we are able to give an explicit characterisation of varieties intersecting tori in a Zariski dense subset. This can be done when the variety is the graph of a rational map due to the following statement.

\begin{proposition}\label{prop: rational map fixing S_1}
        Let $p(z_1,\ldots,z_n)\in \C[z_1,\ldots,z_n]$. Then the rational function \[ r(z_1,\ldots,z_n):=\frac{\overline{p(\overline{z_1},\ldots, \overline{z_n})}}{p(z_1^{-1},\ldots,z_n^{-1})} \] maps $\s_1^n$ to $\s_1$. Moreover, all rational functions mapping $\s_1^n$ to $\s_1$ are of this form, up to multiplication by a monomial $\beta z_1^{t_1}\cdots z_n^{t_n}$ where $\beta \in \s_1$ and $t_k\in \Z$ for all $k$. 
        
        In particular, any rational function in a single variable fixing $\s_1$ setwise is of the form
    \[  \prod_{k=1}^m \frac{z-\alpha_k}{1-\overline{\alpha_k}z} \] where the $\alpha_k$'s are arbitrary complex numbers.
\end{proposition}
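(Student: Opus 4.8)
The plan is to treat the three assertions in order, the middle (``only if'') one being the heart of the matter; the single-variable statement will fall out of the general normal form. For the forward direction, write $\bar p$ for the polynomial with coefficients conjugated, so that $\overline{p(\overline{z_1},\ldots,\overline{z_n})}=\bar p(z_1,\ldots,z_n)$; on $\s_1^n$ one has $\overline{z_k}=z_k^{-1}$, hence $\bar p(z)=\overline{p(z_1^{-1},\ldots,z_n^{-1})}$, and so off the polar locus $r(z)=\overline{p(z^{-1})}/p(z^{-1})$ is a quotient of a nonzero complex number by its conjugate, of modulus $1$. Thus $r$ maps $\s_1^n$ into $\s_1$.

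For the converse, let $r$ be rational with $r(\s_1^n)\subseteq\s_1$, and write $r=P/Q$ with $P,Q\in\C[z_1,\ldots,z_n]$ coprime, $Q\neq 0$. At a point $z\in\s_1^n$ off the polar locus, $\overline{r(z)}=\bar P(z^{-1})/\bar Q(z^{-1})$, so $r\overline r=1$ becomes $P(z)\bar P(z^{-1})=Q(z)\bar Q(z^{-1})$. Both sides are Laurent polynomials agreeing on $\s_1^n$ minus a proper subvariety, which is still Zariski dense in $(\Cx)^n$ (as $\s_1^n$ is Zariski dense in $\C^n$), so the identity holds in the Laurent polynomial ring $R=\C[z_1^{\pm1},\ldots,z_n^{\pm1}]$. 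Now $R$ is a UFD in which $P,Q$ remain coprime, and $\sigma\colon f\mapsto\bar f(z^{-1})$ is an involutive ring automorphism sending units to units; the identity $P\cdot\sigma(P)=Q\cdot\sigma(Q)$ together with coprimality forces $P\mid\sigma(Q)$ and $\sigma(Q)\mid P$, hence $P=u\,\sigma(Q)$ for a unit $u=\beta z_1^{t_1}\cdots z_n^{t_n}$, i.e. $r=\beta z_1^{t_1}\cdots z_n^{t_n}\cdot\bar Q(z^{-1})/Q(z)$.

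To put this in the asserted shape I would pass from $Q$ to its reciprocal polynomial $p(z):=z_1^{d_1}\cdots z_n^{d_n}\,Q(z_1^{-1},\ldots,z_n^{-1})$, where $d_k=\deg_{z_k}Q$: then $\bar p(z)$ and $p(z^{-1})$ equal $\bar Q(z^{-1})$ and $Q(z)$ respectively, up to monomials, so $r$ becomes $\beta' z^{t'}\cdot\bar p(z)/p(z^{-1})$, and evaluating this at a point of $\s_1^n$ and invoking the forward direction forces $|\beta'|=1$, i.e. $\beta'\in\s_1$. For $n=1$ I would then factor $p(z)=c\prod_k(z-a_k)$ over $\C$; using $z^{-1}-a=(1-az)/z$ gives $\bar p(z)/p(z^{-1})=(\bar c/c)\,z^{\deg p}\prod_k\frac{z-\bar a_k}{1-a_k z}$, which — with $\alpha_k:=\bar a_k$ — displays exactly the factors $\frac{z-\alpha_k}{1-\overline{\alpha_k}\,z}$. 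So $r(z)=\gamma z^s\prod_k\frac{z-\alpha_k}{1-\overline{\alpha_k}\,z}$ with $\gamma\in\s_1$ and $s\in\Z$, and the leftover monomial $\gamma z^s$ is absorbed into further factors of the same type via $z=\frac{z-0}{1-\overline{0}\,z}$ and $\gamma=\frac{z-(-\gamma)}{1-\overline{(-\gamma)}\,z}$.

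I do not expect a single hard step. The one genuine idea is recognising that ``$|r|\equiv1$ on the real torus $\s_1^n$'' is, after the substitution $\overline{z_k}=z_k^{-1}$ and Zariski density, an honest algebraic identity, at which point the unique factorisation of $R$ does the work. The places that need care are the Zariski-density step (so that the identity really propagates from $\s_1^n$ to all of $(\Cx)^n$), the bookkeeping of monomial factors when rewriting $\bar Q(z^{-1})/Q(z)$ in the normal form $\bar p(z)/p(z^{-1})$, and verifying that the residual unit has modulus $1$ so that it lands in $\s_1$.
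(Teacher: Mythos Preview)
Your proposal is correct and follows essentially the same route as the paper's proof: both establish the identity $P(\zbar)\bar P(\zbar^{-1})=Q(\zbar)\bar Q(\zbar^{-1})$ in the Laurent polynomial ring via Zariski density of $\s_1^n$, then use unique factorisation together with coprimality of $P,Q$ to conclude that $P$ and $\bar Q(\zbar^{-1})$ differ by a unit, and finally factor into linear terms in the one-variable case. The only cosmetic differences are that the paper packages the forward direction and the passage from ``$|r|=1$ on $\s_1^n$'' to the algebraic identity into a preceding lemma, and that you take the extra (but harmless) step of passing to the reciprocal polynomial to match the exact normal form $\bar p(\zbar)/p(\zbar^{-1})$, whereas the paper simply relabels.
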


At the end we explain how for some simple plane curves $C\seq \C^2$ all points of the intersection $C\cap \s_1^2$ can be found. We also suggest some problems in the last section to encourage an in-depth investigation of the phenomena presented in the paper.

\section{Preliminaries}\label{sec: prelim}

In this section we set out the necessary preliminaries used throughout the paper.

\subsection{Notation and conventions}

We begin by introducing some notation and conventions that we need later.

\begin{itemize}
    \item Indeterminates of polynomials, variables of functions, as well as coordinates, will be denoted by $z_1,z_2,...$ and $w_1,w_2,...$.

    \item Tuples will be denoted by boldface letters, e.g. $\zbar = (z_1,\ldots,z_n)$ and $\zbar = (z_1,\ldots,z_n)$. The length of the tuple will be clear from the context.

    \item For a tuple $\zbar \in \C^n$ we let $\bar{\zbar} := (\bar{z_1},\ldots,\bar{z_n})$ where $\,\bar{}\,$ denotes complex conjugation. Similarly, for $\zbar \in (\Cx)^n$, where $\Cx = \C\setminus \{ 0 \}$, we write $\zbar^{-1} := (z_1^{-1}, \ldots, z_n^{-1})$. For a set $U \seq \C^n$ (or $U\seq (\Cx)^n$) we write $\bar{U}$ and $U^{-1}$ for the images of $U$ under the maps $\, \bar{}\,$ and $^{-1}$ as defined above.

    \item We let $^*: \Cx \to \Cx$ denote the composition of $\, \bar\,$ and $^{-1}$. We extend it to tuples and sets as above.

    \item For a tuple $\zbar=(z_1,\ldots,z_n)$ and a tuple of integers $\mathbf{t}=(t_1,\ldots,t_n)$ we write $\zbar^{\mathbf{t}}:= \prod_{k=1}^n z_k^{t_k}$.

    \item For a function $f: U \seq \C^n \to \C$ we define a function $\bar{f}: \bar{U} \to \C$ by $\bar{f}(\zbar) = \overline{f(\bar{\zbar})}$. If $f$ is holomorphic, then so is $\bar{f}$. For instance, if $f$ is a polynomial then $\bar{f}$ is the polynomial obtained from $f$ by conjugating its coefficients.

    \item Algebraic varieties will always be defined over the complex numbers and will be identified with the sets of their complex points. Thus, when we write $V\seq \C^n$ is an algebraic variety we mean that $V$ is a subset of $\C^n$ defined by polynomial equations.

    \item $\s_1$ denotes the unit circle, i.e. $\s_1:=\{ z\in \C: |z|=1 \}$.

    \item Given a tuple $\bar{k}=(k_1,\ldots,k_d)\in \{ 1,2,\ldots, n \}^d$ we denote by $\pr_{\bar{k}}:\C^n \to \C^d$ the projection map $(z_1,\ldots,z_n)\mapsto (z_{k_1},\ldots,z_{k_d})$.
\end{itemize}

\subsection{Algebraic varieties and Zariski density}

\begin{definition} Let $n\geq 1$ be an integer.
\begin{itemize}
    \item     An \textit{algebraic variety} or a \textit{Zariski closed set} in $\C^n$ is the set of common zeroes of some finite list of polynomials in $n$ variables. For instance, the set $\{ (z_1,z_2,z_3): z_2=z_1^2, 2iz_1^4+\pi z_2^3+ez_3^5=0 \}$ is an algebraic variety in $\C^3$.

    \item An algebraic variety $V$ is called \textit{irreducible} if it cannot be written as a union of two other varieties each of which is properly contained in (i.e. is contained in and is not equal to) $V$.

    \item A subset $U$ of an algebraic variety $V$ is said to be \textit{Zariski dense} in $V$ if $V$ is the smallest algebraic variety containing $V$; in other words, any polynomial that vanishes at each point of $U$ must also vanish at each point of $V$. For instance, any infinite subset of $\C$ is Zariski dense in $\C$.

    
\end{itemize}
\end{definition}

\begin{example}
    The set $\s_1^n$ is Zariski dense in $\C^n$. Let us prove this for $n=2$. Assume $\s_1^2$ is not Zariski dense in $\C^2$, i.e. there is a non-zero polynomial $p(z_1,z_2)$ vanishing on $\s_1^2$. Then for some $t\in \s_1$ the polynomial $p(x,t)\in \C[x]$ is non-zero and vanishes on all of $\s_1$. This is not possible, for $p(x,t)$ has only finitely many zeroes. 
\end{example}

\subsection{Complex and real dimensions}

\begin{definition}

\begin{itemize}
    \item[] 
    \item     The dimension of a variety $V\seq \C^n$ is the largest number $d$ for which the projection of $V$ to some $d$ coordinates contains a non-empty open subset of $\C^n$. This is also referred to as the complex dimension of $V$.

    \item Given a subset $V$ of $\R^n$ cut out by polynomial equations with real coefficients, its real dimension is the largest number $d$ for which the projection of $V$ to some $d$ coordinates contains a non-empty open subset of $\R^n$.
\end{itemize}   
\end{definition}

In this papers algebraic varieties are assumed to be defined over the complex numbers and are subsets of $\C^n$, so often write $\dim V$ instead of $\dim_{\C}V$ and refer to it as the dimension of $V$ (without specifying the word ``complex''). Every algebraic variety $V\seq\C^n$ can be thought of as a subset of $\R^{2n}$ defined by replacing the complex variables and coefficients of polynomials by their real and imaginary parts. For instance, the equation $z_2=z_1+1+2i$ defines a curve in $\C^2$ (i.e. it has complex dimension 1) and corresponds to the set $\{ (x_1,y_1,x_2,y_2): x_2=x_1+1, y_2=y_1+2 \}$ in $\R^4$ which has real dimension 2. In general, for a variety $V\seq \C^n$ we have $\dim_{\R}V = 2\dim_{\C}V$. There are subsets of $\R^n$ which are defined by real polynomial equations but do not correspond to a variety in $\C^n$. For instance, the unit circle in $\R^2$ is defined by the equation $x^2+y^2=1$. This corresponds to the equation $z\bar{z}=1$ which is not an algebraic equation any more. In such situations it makes sense to talk about real dimension (in this example it is 1), but not complex dimension.

\subsection{Dominant projections and degree}

\begin{definition}
    Let $V \seq \C^n$ be an algebraic variety and let $\pi : V \to \C^d$ be the projection map to some $d$ coordinates. 

    \begin{itemize}
        \item We say $\pi: V \to \C^d$ is \textit{dominant} if $\dim \pi(V) = d$. If $d=\dim V$ then there is a dominant projection to some $d$ coordinates.

        \item If $d = \dim V$ and $\pi: V \to \C^d$ is dominant then there is a Zariski closed set $W\subsetneq \C^d$ with $\dim W < d$ such that for all $\wbar\in \C^d\setminus W$ the fibres $\pi^{-1}(\wbar)$ are finite and of the same size. This finite number is called the degree of $\pi$, written $\deg \pi$. If $\deg \pi = m$ then ``generically'' $\pi$ is $m$-to-$1$ (i.e. one point has $m$ preimages). 
    \end{itemize}
\end{definition}

\begin{example}
    Consider the curve $w^2=z$ in $\C^2$ and let $\pi$ be the projection to the $z$-coordinate. Then the projection is dominant and every non-zero point $w$ has exactly two preimages under $\pi$, that is, $\deg \pi = 2$. Note that $0$ has only one preimage (of multiplicity two though).
\end{example}

The reader is referred to any algebraic geometry textbook (e.g. \cite{Shafarevich}) for further details on algebraic varieties and other related concepts.

\subsection{Algebraic maps}

Some polynomial equations are simpler than others. For instance, the equation $z^2w=z^3+1$ allows one to express $w$ in terms of $z$ as the rational function $\frac{z^3+1}{z^2}$, which comes in useful in many situations. This cannot be done for the equation $w^2=z^3-1$, so it is somewhat harder to study. However, it is still possible to express $w$ in terms of $z$ provided that we are happy to consider algebraic functions which are more general than rational functions. Thus, the equation $w^2=z^3-1$ may be thought of as a pair of equations $w=\sqrt{z^3-1}$ and $w=-\sqrt{z^3-1}$. But we must first understand where and how the function $\sqrt{z^3+1}$ can be defined. While this is not a difficult task, it is a subtle one, so we should be careful. Doing this in higher dimensions makes the problem even more complicated, so we give a brief account of algebraic functions next. The reader is referred to \cite[\S 3]{Aslanyan-Kirby-Mantova} for details. See also \cite[pp. 284--308]{ahlfors} for a detailed exposition of the construction of algebraic functions in a single variable.

\begin{example}
    Consider the equation $w^2=z$. Expressing $w$ in terms of $z$ we get $\pm \sqrt{z}$. First, we note that $\sqrt{z}$ is locally well defined around any non-zero point by the implicit function theorem. The partial derivative $\frac{\partial(w^2-z)}{\partial w}$ vanishes at $z=0,w=0$, so the implicit function theorem cannot be applied there. The point $(0,0)$ is exceptional in the sense that for any non-zero $z$ the equation $w^2=z$ has two solutions in $w$, while for $z=0$ there is only one solution (of multiplicity 2). One says that $(0,0)$ is a \textit{ramification point}. Thus, if we define $\sqrt{z}$ and $-\sqrt{z}$ outside $0$, then at $0$ these two branches merge together and violate the nice properties of these functions (e.g. continuity). This may lead one to think that two continuous branches of $\sqrt{z}$ can be defined on the puncture plane $\Cx$. Unfortunately, this is not the case. To understand this, let us suppose we work in a small disc around the point $1$. Since the function $z$ does not vanish there, we can write it as $\exp(u)$ where $u = \log z$ is a branch of logarithm. Then the two branches of $\sqrt{z}$ would be $\exp(\sfrac{u}{2})$ and $-\exp(\sfrac{u}{2})$. Let us work with the first one. This function can be analytically continued to the whole complex plane. However, $\exp(u+2\pi i) = \exp(u)$ while $\exp(\frac{u+2\pi i}{2})= -\exp(\frac{u}{2})$. Thus, the same value of $z=\exp(u)$ would necessarily be mapped to two values by the function $\sqrt{z}$ if it could be defined in a punctured neighbourhood of zero. The reason of this failure is that a punctured disc is not a simply connected domain. Indeed, it is known that $\log z$ can be defined only in simply connected subsets of $\Cx$ and similarly $\sqrt{z}$ can be defined on such subsets too. Thus, we need to make a \textit{branch cut}, that is, remove a subset of $\Cx$ making it simply connected. For instance, removing the negative real half-line would result in a simply connected set where two branches of $\sqrt{z}$ can be defined.
\end{example}

\begin{example}
    Now let us consider the example we started with, $w^2=z^3-1$. Here the ramification locus is when $z^3=1$, i.e. $z$ is a third root of unity (of which there are three). So if we remove a real line from the complex plane containing all third roots of unity then the function $\sqrt{z^3-1}$ can be defined on such a set.
\end{example}

More generally, given a polynomial equation $p(z,w)=0$ of degree $d$ in $w$, there is a branching locus, namely, the finite set of points where the partial derivative $\frac{\partial p}{\partial y}$ vanishes. This is the set where the value of $z$ corresponds to $<d$ values of $w$. Then on any simply connected set not containing any of these points $d$ holomorphic functions $f_1(z),\ldots,f_d(z)$ can be defined which satisfy the following equality:
\[ p(z,w) = c\prod_{k=1}^d (w-f_k(z)). \]

In general, the following holds (see \cite[Proposition ??]{Aslanyan-Kirby-Mantova}).

\begin{fact}\label{fact: alg maps}
    Let $V \seq \C^n$ be an irreducible algebraic variety of dimension $d$. Let $\bar{k}:=(k_1,\ldots,k_d)$ be a tuple such that the projection map $\pr_{\bar{k}} : V \to \C^d$ is dominant and of degree $m$.\footnote{This means the projection map is $m$-to-$1$ outside a proper Zariski closed subset.} Then there is a Zariski closed subset $W\seq \C^d$ of dimension $<d$ such that for any simply connected set $U \seq \C^d$ with $U\cap W = \emptyset$ there are $m$ analytic maps $F_1,\ldots,F_m : U \to \C^{n-d}$ such that
    \[ \forall \zbar \in U\, \forall \wbar \in \C^{n-d}\, [ (\zbar,\wbar)\in V \text{ iff } \wbar=F_k(\zbar) \text{ for some } k ].  \]
\end{fact}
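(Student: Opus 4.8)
The plan is to reduce this multivariable statement to the classical one‑variable theory of algebraic functions by choosing a primitive element, and then to use the simple‑connectedness of $U$ to globalise the local branches produced by the implicit function theorem. First I would assume, after renumbering, that $\bar k=(1,\dots,d)$, so $\pi:=\pr_{\bar k}\colon V\to\C^d$ is the projection to the first $d$ coordinates; since $\pi$ is dominant and $\dim V=d$, the function field $\C(V)$ is a finite extension of $\C(z_1,\dots,z_d)$ of degree $m=\deg\pi$. By the primitive element theorem pick $\theta\in\C(V)$ with $\C(V)=\C(z_1,\dots,z_d)(\theta)$, and let $q(z_1,\dots,z_d,T)\in\C[z_1,\dots,z_d,T]$ be its minimal polynomial, taken primitive and irreducible in $T$ of $T$‑degree $m$; by Gauss's lemma the hypersurface $\tilde V:=\{q=0\}\subseteq\C^{d+1}$ is irreducible of dimension $d$. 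Each coordinate function $z_{d+j}$, $1\le j\le n-d$, lies in $\C(V)$, hence equals a rational function $R_j(z_1,\dots,z_d,\theta)$, while $\theta$ itself is a rational function $S(z_1,\dots,z_n)$ on $V$. These data give mutually inverse birational maps between $\tilde V$ and $V$, the map $\tilde V\dashrightarrow V$ being $\Phi\colon(\zbar,t)\mapsto(\zbar,R_1(\zbar,t),\dots,R_{n-d}(\zbar,t))$, with $S\circ\Phi$ equal to the coordinate $t$.

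Next I would fix the exceptional set. Start from the $W\subsetneq\C^d$ of dimension $<d$ supplied by the definition of $\deg\pi$, over whose complement $\pi^{-1}(\zbar)$ has exactly $m$ points, and enlarge it by adjoining: the zero locus of the leading coefficient of $q$ in $T$; the locus $\{\operatorname{disc}_T q=0\}$; and the image under $\zbar\mapsto\zbar$ of the (proper, hence lower‑dimensional) subset of $\tilde V$ where $\Phi$ or $S$ fails to be regular. As a finite union of proper Zariski closed subsets of $\C^d$, the enlarged $W$ is still a proper Zariski closed subset, so $\dim W<d$.

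Now fix a simply connected $U\subseteq\C^d$ with $U\cap W=\emptyset$. Over $U$ the leading coefficient of $q$ in $T$ and $\operatorname{disc}_T q$ are nowhere zero, so $q(\zbar,T)$ has $m$ simple roots for every $\zbar\in U$, and $\{(\zbar,t)\in U\times\C:q(\zbar,t)=0\}\to U$ is an unramified $m$‑sheeted covering (local holomorphic sections come from the implicit function theorem). Since $U$ is simply connected, the monodromy theorem lets me continue each root to a single‑valued holomorphic function, yielding $\theta_1,\dots,\theta_m\colon U\to\C$ with $q(\zbar,\theta_k(\zbar))\equiv0$ and $\theta_k(\zbar)\neq\theta_l(\zbar)$ for $k\neq l$. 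Set $F_k:=(R_1(\cdot,\theta_k),\dots,R_{n-d}(\cdot,\theta_k))\colon U\to\C^{n-d}$; this is holomorphic because the denominators of the $R_j$ do not vanish over $U$.

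Finally I would verify the parametrisation property. For $\zbar\in U$ the point $(\zbar,F_k(\zbar))=\Phi(\zbar,\theta_k(\zbar))$ lies in $V$, since $\Phi$ is regular at $(\zbar,\theta_k(\zbar))$ (because $\zbar\notin W$) and the image of $\Phi$ on its regular locus is contained in its Zariski closure, namely $V$; moreover $S(\zbar,F_k(\zbar))=\theta_k(\zbar)$, so these $m$ points of $V$ are pairwise distinct. As $\zbar\notin W$, the fibre $\pi^{-1}(\zbar)$ has exactly $m$ points, hence equals $\{(\zbar,F_k(\zbar)):1\le k\le m\}$, which is precisely the claimed equivalence ``$(\zbar,\wbar)\in V$ iff $\wbar=F_k(\zbar)$ for some $k$'' (and incidentally shows the $F_k$ are pairwise distinct as functions). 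I expect the main nuisance to be bookkeeping the exceptional set: ensuring that every place where the birational dictionary between $V$ and $\tilde V$, or the ``exactly $m$ simple roots'' picture, could degenerate is absorbed into one proper Zariski closed subset of the base $\C^d$. The primitive‑element reduction and the covering‑space/monodromy globalisation are each routine once this setup is correctly assembled.
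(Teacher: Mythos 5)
The paper does not prove this statement at all: it is quoted as a Fact and attributed to \cite{Aslanyan-Kirby-Mantova} (and, in one variable, to \cite{ahlfors}), so there is no in-paper argument to compare yours against. Your proposal is a correct, self-contained proof along the standard lines: reduce to a hypersurface via a primitive element $\theta$, so that the branches of $V$ over the base correspond to the roots of one irreducible polynomial $q(\zbar,T)$; remove from the base the leading-coefficient locus, the discriminant locus, the non-generic fibre locus, and the projections of the indeterminacy loci of the birational dictionary; then use the implicit function theorem plus triviality of coverings over a simply connected base to globalise the $m$ simple roots, and transport them back to $V$ via the regular maps $R_j$. The counting step at the end (the $m$ distinct points $(\zbar,F_k(\zbar))$ exhaust the $m$-point fibre) correctly closes the ``iff''. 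Two small points worth tightening: you implicitly need $U$ to be open and connected (the Fact as stated says only ``simply connected set''; for ``analytic maps'' and for unique path-lifting you want an open connected $U$, which is how it is used in the paper), and when you adjoin to $W$ the images of the bad loci under the projection you should take Zariski closures of the (constructible) images so that $W$ is genuinely Zariski closed -- this costs nothing since projection does not raise dimension. Neither affects the validity of the argument.
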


The coordinate functions of the maps $F_k$ are algebraic as per the following definition: a holomorphic function $f:U\seq \C^n \to \C$ is said to be algebraic if there is a non-zero polynomial $p(\zbar,w)\in \C[z_1,\ldots,z_n, w]$ such that $p(\zbar,f(\zbar))=0$ for all $\zbar \in U$.

\section{Proofs of the main results}

In this section we prove our main results, which we restate here for the convenience of the reader. The formulations given below use the notation and nomenclature introduced in \S\ref{sec: prelim} and are slightly different from, but trivially equivalent to, the statements in the introduction.

\subsection{Properties of varieties with a Zariski dense intersection with $\s_1^n$}



\begin{customthm}{\ref{thm: V = graph alg map}}
    Let $V \seq \C^n$ be an irreducible algebraic variety of dimension $d$. Assume the projection of $V$ to the first $d$ coordinates is dominant. Then $V\cap \s_1^n$ is Zariski dense in $V$ if and only if there is a holomorphic map $F=(f_1,\ldots,f_{n-d}): U \to \C^{n-d}$ defined on an open set $U \seq (\Cx)^d$ such that 
    \begin{itemize}
        \item $V$ contains the graph of $f$, and 
        \item for all $\zbar\in U\cap U^*$ and for each $k$ we have $\overline{f_k(\zbar)}\cdot f_k(\zbar^*)=1$.
    \end{itemize}    
    Moreover, the map $F$ is necessarily algebraic and we may assume $U \cap \s_1^d \neq \emptyset$.
\end{customthm}

\begin{proof}
 Since the projection of $V$ to the first $d$ coordinates is dominant, by Fact \ref{fact: alg maps} there are algebraic maps $F_1,\ldots,F_m: U \to \C^{n-d}$ defined on a simply connected domain  $U \seq \C^d$ (avoiding the ramification locus) such that for every $(\zbar,\wbar) \in U \times \C^{n-d}$ we have \[ (\zbar,\wbar) \in V \text{ iff } \wbar = F_k(\zbar) \text{ for some } k. \]

Now we specify how we choose $U$. Since the ramification locus $R \seq \C^d$ has dimension $<d$, the real dimension of $R \cap \s_1^d$ must be less than $d$. Therefore, we can choose $U\seq \C^d\setminus R$ simply connected such that $U \cap \s_1^d = \s_1^d \setminus R$.


Now assume $V\cap \s_1^n$ is Zariski dense in $V$. Then the following set must also be Zariski dense in $V$: \[ \{ (\zbar, \wbar)\in (U\times \C^{n-d})\cap V \cap \s_1^n \} = \bigcup_{i=1}^m\{ (\zbar, F_i(\zbar)): \zbar \in U \cap \s_1^d \text{ and } F_i(\zbar)\in \s_1^{n-d} \}.  \]
    Therefore, for some $i=i_0$ the set $\{ (\zbar, F_{i_0}(\zbar)): \zbar \in U \cap \s_1^d \text{ and } F_{i_0}(\zbar)\in \s_1^{n-d} \}$ is Zariski dense in $V$. For ease of notation, we write $F:=F_{i_0}$. Then the set 
    \[ \{ \zbar\in U \cap \s_1^d: F(\zbar)\in \s_1^{n-d} \}\] is Zariski dense in $\C^d$.

    Now let $\bar{F}(\zbar):=\overline{F(\bar{\zbar})} : \bar{U} \to \C^{n-d}$. Then $\bar{F}$ is an algebraic map defined on $\bar{U}$. To see this, we write $F$ and $\bar{F}$ in coordinates as $F=(f_1,\ldots,f_{n-d})$ and $\bar{F}=(\bar{f}_1,\ldots,\bar{f}_{n-d})$. Then if $f_i$ satisfies an equation $p_i(\zbar,f_i(\zbar))=0$ where $p_i(\zbar,w_i)$ is an irreducible polynomial, then $\bar{f}_i$ satisfies the equation $\overline{p_i}(\zbar, \bar{f}_i(\zbar))=0$ where $\overline{p_i}(\zbar, w_i) := \overline{p_i(\bar{\zbar}, \bar{w_i})}$ is also a polynomial. 
    
    When $\zbar \in U\cap \s_1^d$, we have $\zbar^{-1} = \bar{\zbar}$ and $\bar{f}_k(\zbar^{-1}) = \overline{f_k(\zbar)}$, and so $f_k(\zbar)\bar{f}_k(\zbar^{-1})=|f_k(\zbar)|^2$.
    Thus, if $\zbar \in U\cap \s_1^d$ then $F(\zbar)\in \s_1^{n-d}$ if and only if for all $k$ we have $f_k(\zbar)\bar{f}_k(\zbar^{-1})=1$. Therefore, $f_k(\zbar)\bar{f}_k(\zbar^{-1})-1$ vanishes on a Zariski dense subset of $\C^d$. We claim that it must identically vanish on $U \cap U^*$. This follows immediately from the definition of Zariski density when $h_k(\zbar):=f_k(\zbar)\bar{f}_k(\zbar^{-1})-1$ is a polynomial. In general, $h_k$ is an algebraic function satisfying an equation $q_k(\zbar, h_k(\zbar))=0$ where $q_k(\zbar, w_k)$ is an irreducible polynomial. This means that $q_k(\zbar,0)$ vanishes on a Zariski dense subset of $\C^d$, hence it is identically zero. Hence, $q_k(\zbar,w_k)$ is divisible by $w_k$ and, since it is irreducible, it must coincide with a constant multiple of $w_k$. In other words $h_k=0$, which is what we wanted to prove.
    
Conversely, if there is an $F$ as in the statement of the theorem, then it must agree with one of the $F_k$'s on $U$. In particular, the domain $U$ can be extended to contain an open subset of $\s_1^d$ so we assume this is the case. Then the identity \[ \overline{f_k(\zbar^*)}\cdot f_k(\zbar)=1 \] holds on $U\cap U^*$. In particular, for any $\zbar \in U \cap \s_1^d$ we get $F(\zbar)\in \s_1^{n-d}$ which shows that $\dim_{\R}(V\cap \s_1^n)=d$ and $V\cap \s_1^n$ is Zariski dense in $V$.
\end{proof}

Corollary \ref{cor: intersection is large} follows immediately from this proof.

\begin{remark}
   It can be shown that under the assumptions of Theorem \ref{thm: V = graph alg map} the projection of $V\cap \s_1^n$ to the first $d$ coordinates can have a $d$-dimensional complement in $\s_1^d$. See Exercise \ref{prob: proper arc} in \S\ref{sec: problems}.
\end{remark}

Proposition \ref{prop: V=V* and odd degree} and its proof are direct extensions of the $n=2$ case considered in \cite[p. 228]{corvaja-masser-zannier}.

\begin{customprop}{\ref{prop: V=V* and odd degree}}
    Let $V\seq \C^n$ be an irreducible algebraic variety of dimension $d$.
    \begin{itemize}
        \item If $V \cap \s_1^n$ is Zariski dense in $V$ then $V = V^*$.

        \item If $V = V^*$, and the projection of $V$ to some $d$ coordinates is dominant and of odd degree, then $V\cap \s_1^n$ is Zariski dense in $V$.
    \end{itemize}
\end{customprop}

\begin{proof}
    Since the restriction of $^*$ to $\s_1^n$ is the identity map, $V^* \cap \s_1^n = (V\cap \s_1^n)^* = V\cap \s_1^n$. Taking Zariski closure of both sides we get $V^*=V$.

    Now suppose the projection $\pi$ to the first $d$ coordinates is dominant and of odd degree. For any $\zbar \in \s_1^d \cap \pi(V)$ the set $V_{\zbar}:=\{ \wbar\in \C^{n-d}: (\zbar, \wbar)\in V \}$ is closed under $^*$. For $\zbar$ outside a proper Zariski closed subset of $\C^d$, the set $V_{\zbar}$ is finite and its size is equal to $\deg \pi$ which is assumed to be odd. Therefore, for such $\zbar$ the map $^*: V_{\zbar}\to V_{\zbar}$ must have a fixed point $\wbar$, which then is necessarily from $\s_1^{n-d}$. Then $(\zbar,\wbar)\in V \cap \s_1^n$ and the set of all such points has real dimension $d$ and so is Zariski dense in $V$.
\end{proof}

\subsection{Varieties which are graphs of rational maps}

\begin{lemma}\label{lem: alg map fixing s_1}
    Given an algebraic function $g: U \seq \C^n \to \C$ with $U \cap \s_1^n \neq \emptyset$, the function $f(\zbar) := \frac{g(\zbar)}{\bar{g}(\zbar^{-1})}$, defined on $\hat{U}:= U \cap U^*$, maps $\hat{U}\cap \s_1^n$ to $\s_1$. Conversely, for any algebraic function $f:U\seq \C^n \to \C$ with $U\cap \s_1^n\neq \emptyset$ which maps $U\cap \s_1^n$ to $\s_1$, we have $f(\zbar)^2 = \frac{f(\zbar)}{\bar{f}(\zbar^{-1})}$ on $U \cap U^*$.    
\end{lemma}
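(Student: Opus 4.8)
The plan is to prove the two assertions separately; both come down to the elementary observation that $^{*}$ restricts to the identity on $\s_1^n$, equivalently that $\zbar^{-1}=\bar{\zbar}$ for $\zbar\in\s_1^n$, so that for any holomorphic $h$ and any $\zbar\in\s_1^n$ one has $\bar{h}(\zbar^{-1})=\overline{h(\overline{\zbar^{-1}})}=\overline{h(\zbar)}$.

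For the first assertion I would begin with the domain bookkeeping: $f=g/\bar{g}(\zbar^{-1})$ makes sense exactly when $\zbar\in U$ and $\zbar^{-1}\in\bar{U}$, i.e.\ $\zbar\in U\cap U^{*}=\hat U$, away from the zero locus of $\bar g(\zbar^{-1})$; there it is algebraic, since $\bar g$ is algebraic (conjugate the coefficients of a polynomial relation satisfied by $g$, as in the proof of Theorem~\ref{thm: V = graph alg map}) and algebraicity is preserved under substituting $\zbar\mapsto\zbar^{-1}$ and taking quotients. Then, for $\zbar\in\hat U\cap\s_1^n$ at which $f$ is defined, the observation above gives $f(\zbar)=g(\zbar)/\overline{g(\zbar)}$, which has modulus $1$ (and the denominator is nonzero there, being $\overline{g(\zbar)}$); hence $f$ maps $\hat U\cap\s_1^n$ into $\s_1$.

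For the converse I would run the Zariski-density argument from the proof of Theorem~\ref{thm: V = graph alg map}. By the same observation, for $\zbar\in U\cap\s_1^n$ (a set contained in $U\cap U^{*}$, since $^{*}$ fixes $\s_1^n$) we have $\bar{f}(\zbar^{-1})=\overline{f(\zbar)}$, and since $|f(\zbar)|=1$ this gives $f(\zbar)\,\bar{f}(\zbar^{-1})=1$. Hence the function $h(\zbar):=f(\zbar)\,\bar{f}(\zbar^{-1})-1$ — holomorphic on $U\cap U^{*}$ and algebraic there by the algebraicity remarks above — vanishes on $U\cap\s_1^n$. But $U\cap\s_1^n$ is a non-empty relatively open subset of $\s_1^n$, hence Zariski dense in $\C^n$ (by the same one-coordinate-at-a-time argument that shows $\s_1^n$ is Zariski dense: a polynomial vanishing on a product of arcs vanishes identically). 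Writing an irreducible polynomial relation $q(\zbar,h(\zbar))=0$, we see $q(\zbar,0)$ vanishes on a Zariski dense subset of $\C^n$, so $q(\zbar,0)\equiv0$, whence $w\mid q$ and, by irreducibility, $q$ is a constant multiple of $w$; therefore $h\equiv0$ on $U\cap U^{*}$. The identity $f(\zbar)\,\bar f(\zbar^{-1})=1$ also shows neither factor vanishes on $U\cap U^{*}$, so dividing by $f(\zbar)$ yields $f(\zbar)^{2}=f(\zbar)/\bar{f}(\zbar^{-1})$ there.

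The step I expect to be the main obstacle is the last one in the converse — transferring the pointwise identity from the maximal-real-dimensional ``boundary'' set $U\cap\s_1^n$ to the full complex domain $U\cap U^{*}$ — which is precisely where one needs both that $U\cap\s_1^n$ is Zariski dense in $\C^n$ and the ``an algebraic function vanishing on a Zariski dense set is identically zero'' principle, handled exactly as in the proof of Theorem~\ref{thm: V = graph alg map}. The remaining points (the domain of $f$, algebraicity of $\bar g$ and of $\bar f(\zbar^{-1})$, and the absence of poles of $\bar f(\zbar^{-1})$ on $(\Cx)^n\supseteq U\cap U^{*}$) are routine.
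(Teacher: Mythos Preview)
Your proposal is correct and follows essentially the same route as the paper's proof: both parts rest on the identity $\bar{h}(\zbar^{-1})=\overline{h(\zbar)}$ for $\zbar\in\s_1^n$, and the converse is obtained by showing $f(\zbar)\,\bar f(\zbar^{-1})=1$ on $U\cap\s_1^n$ and then propagating this to all of $U\cap U^{*}$ via the ``algebraic function vanishing on a Zariski dense set'' argument from the proof of Theorem~\ref{thm: V = graph alg map}. The only cosmetic difference is that for the first assertion the paper checks the slightly stronger identity $f(\zbar)\cdot\bar f(\zbar^{-1})=1$ on all of $\hat U$ before specialising to $\s_1^n$, whereas you compute $f(\zbar)=g(\zbar)/\overline{g(\zbar)}$ directly on $\s_1^n$; both are immediate.
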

\begin{proof}
    First, observe that since $^*$ fixes $\s_1$ pointwise, $U\cap \s_1^n = U \cap U^* \cap \s_1^n$, so these two intersections are non-empty at the same time.
    
    For the first part of the lemma, straightforward calculations show that
    \[ f(\zbar)\cdot \overline{f(\zbar^*)} = f(\zbar)\cdot \bar{f}(\zbar^{-1}) = \frac{g(\zbar)}{\bar{g}(\zbar^{-1})} \cdot \frac{\bar{g}(\zbar^{-1})}{g(\zbar)} = 1.  \]

    In particular, if $\zbar \in \hat{U}\cap \s_1^n$ then $f(\zbar) \cdot \overline{f(\zbar)} = 1$ hence $f(\zbar)\in \s_1$.

    For the converse, since $f$ maps $U \cap \s_1^n$ to $\s_1$, for all $\zbar \in U \cap \s_1^n$ we have $f(\zbar)\cdot \bar{f}(\zbar^{-1}) = f(\zbar) \cdot \overline{f(\zbar)} = 1$. Since $f(\zbar)\cdot \bar{f}(\zbar^{-1})$ is algebraic on $U \cap U^*$ and is equal to $1$ on $U \cap U^*\cap \s_1^n$, it must be equal to $1$ on all of $U \cap U^*$. Then
    $ \frac{f(\zbar)}{\bar{f}(\zbar^{-1})} = f(\zbar)^2$ on $U \cap U^*$.
\end{proof}

This lemma gives a recipe for constructing varieties with a Zariski dense intersection with an appropriate torus. For instance, if $g(\zbar)$ is an algebraic function then the minimal polynomial of $\frac{g(\zbar)}{\bar{g}(\zbar^{-1})}$ over $\C(\zbar)$ can be multiplied through by a common denominator and turned into a polynomial in $\C[\zbar, w]$ which then defines a variety in $\C^{n+1}$ with a Zariski dense intersection with $\s_1^{n+1}$. The lemma also shows that all varieties containing a Zariski dense subset from a torus can be obtained in this way. Indeed, the conclusion that $f(\zbar)^2 = \frac{f(\zbar)}{\bar{f}(\zbar^{-1})}$ on $\hat{U} := U \cap U^*$ can be written as $f(\zbar) = \frac{g(\zbar)}{\bar{g}(\zbar^{-1})}$ with $g(\zbar) = \sqrt{f(\zbar)}$ on a suitable simply connected subset of $\hat{U}$ that avoids the zeroes and poles of $f$.

Now we can use this lemma to prove Proposition \ref{prop: rational map fixing S_1}.


\begin{customprop}{ \ref{prop: rational map fixing S_1}}
        Let $p(z_1,\ldots,z_n)\in \C[z_1,\ldots,z_n]$. Then the rational function $r(\zbar):=\frac{p(\zbar)}{\pbar(\zbar^{-1})}$ maps $\s_1^n$ to $\s_1$. Moreover, all rational functions mapping $\s_1^n$ to $\s_1$ are of this form, up to multiplication by a monomial $\beta z_1^{t_1}\cdots z_n^{t_n}$ where $\beta \in \s_1$ and $t_k\in \Z$ for all $k$. 
        
        In particular, any rational function in a single variable fixing $\s_1$ setwise is of the form
    \[  \prod_{k=1}^m \frac{z-\alpha_k}{1-\overline{\alpha_k}z} \] where the $\alpha_k$'s are arbitrary complex numbers.
\end{customprop}

\begin{proof}

By Lemma \ref{lem: alg map fixing s_1} the map $\frac{p(\zbar)}{\bar{p}(\zbar^{-1})}$ sends $\s_1^n$ to $\s_1$, hence so does the map $\frac{\overline{p(\bar{\zbar})}}{p(\zbar^{-1})}$.

    Now suppose $r(\zbar)=\frac{p(\zbar)}{q(\zbar)}$ maps $\s_1^n$ to $\s_1$, and assume $p$ and $q$ are coprime in the ring of polynomials $\C[\zbar]$. Then by Lemma \ref{lem: alg map fixing s_1} we have $r(\zbar)\cdot \bar{r}(\zbar^{-1})=1$.
    Hence
    \[ p(\zbar)\cdot \pbar(\zbar^{-1}) = q(\zbar)\cdot \qbar(\zbar^{-1}).  \]
    We consider this equality in the ring of Laurent polynomials $\C[\zbar, \zbar^{-1}]$ which is a unique factorisation domain. Since $p$ and $q$ are coprime, there must be Laurent polynomials $f$ and $g$ such that
    \[  p(\zbar) = f(\zbar) \qbar(\zbar^{-1}) \text{ and } q(\zbar) = g(\zbar) \pbar(\zbar^{-1}). \]
    From these we get
    \[ q(\zbar) = g(\zbar) \bar{f}(\zbar^{-1}) q(\zbar). \]
    Now we can conclude that $f(\zbar)$ and $g(\zbar)$ are units in the ring of Laurent polynomials, that is, constant multiples of monomials. Therefore, $q(\zbar) = \beta \cdot \zbar^{\mathbf{t}}\pbar(\zbar^{-1})$ with $\mathbf{t}\in \Z^n$. Since $r$ maps $\s_1^n$ to $\s_1$, and so does the function $\zbar^{\mathbf{t}}\pbar(\zbar^{-1})$, we must have $\beta\in \s_1$. Then we can write
    \[  r(\zbar) = \beta^{-1}\zbar^{-\mathbf{t}} \frac{p(\zbar)}{\pbar(\zbar^{-1})}. \]
    which is of the required form.

    Finally, when $r$ is a rational function of a single variable then decomposing its numerator and denominator into linear factors gives the required form.
\end{proof}

\subsection{Deciding whether $V\cap \s_1^n$ is Zariski dense in $V$}

The characterisation given above does not immediately tell us whether for a given variety $V$ we can understand if $V\cap \s_1^n$ is Zariski dense in $V$. We address this question here.

\begin{proposition}
    There is an algorithm which, given an algebraic variety $V\seq \C^n$, decides if $V\cap \s_1^n$ is Zariski dense in $V$. 
\end{proposition}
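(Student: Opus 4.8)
The plan is to turn the criterion of Theorem~\ref{thm: V = graph alg map} into a sentence in the first-order theory of the real field and then invoke Tarski's theorem on the decidability of real closed fields. (For this to make sense we take $V$ to be given by polynomials with coefficients in a computable field closed under complex conjugation, e.g.\ the algebraic numbers $\Qalg$; this is implicit in the word ``given''.) First I would reduce to the case that $V$ is irreducible: computer algebra (Gröbner bases / primary decomposition) computes the irreducible components $V_1,\ldots,V_r$ of $V$, and since each $\overline{V_i\cap\s_1^n}\seq V_i$ while distinct components are pairwise non-nested, $V\cap\s_1^n$ is Zariski dense in $V$ iff $V_i\cap\s_1^n$ is Zariski dense in $V_i$ for every $i$. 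So fix irreducible $V$ of dimension $d$; compute $d$, test the $\binom nd$ coordinate projections, and renumber so that the projection $\pr\colon V\to\C^d$ to the first $d$ coordinates is dominant, of degree $m$ (all computable by elimination).

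The heart of the argument is a decidable reformulation of Theorem~\ref{thm: V = graph alg map}: $V\cap\s_1^n$ is Zariski dense in $V$ if and only if $V=V^*$ and there is a point $(\zbar,\wbar)\in V\cap\s_1^n$ whose first $d$ coordinates $\zbar$ lie outside the ramification locus of $\pr$ -- equivalently, such that the fibre of $V$ above $\zbar$ has exactly $m$ points. For the forward direction, $V=V^*$ is the first part of Proposition~\ref{prop: V=V* and odd degree}, and Theorem~\ref{thm: V = graph alg map} produces a branch $F$ over an open $U\seq(\Cx)^d$ with $U\cap\s_1^d\neq\emptyset$ obeying the functional equation; since the trace of the ramification locus on $\s_1^d$ has real dimension $<d$, evaluating $F$ at a point of $U\cap\s_1^d$ off the ramification locus gives such a point of $V\cap\s_1^n$. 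For the converse: near such a $\zbar$ the projection is unramified, so by Fact~\ref{fact: alg maps} the fibre is cut out by $m$ pairwise distinct analytic branches $F_1,\ldots,F_m$, say with $F_1(\zbar)=\wbar$; the holomorphic map $G(\ybar):=\bigl(F_1(\ybar^*)\bigr)^*$, defined near $\zbar$ because $\zbar^*=\zbar$, has graph $\Gamma_{F_1}^*\seq V^*=V$, so it coincides with one of the $F_i$, and from $G(\zbar)=\wbar^*=\wbar=F_1(\zbar)$ together with the distinctness of the branches we get $G=F_1$; but $G=F_1$ is precisely the functional equation $\overline{f_{1,k}(\ybar)}\cdot f_{1,k}(\ybar^*)=1$ of Theorem~\ref{thm: V = graph alg map}, so $V\cap\s_1^n$ is Zariski dense in $V$.

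It remains to observe that both clauses of the criterion are algorithmically decidable. The variety $V^*$ is computable -- it equals $(\overline{V})^{-1}$, where conjugating coefficients is effective over $\Qalg$ and coordinate-wise inversion is a birational automorphism of $(\Cx)^n$ realised by elimination -- so ``$V=V^*$'' reduces to two radical-ideal-membership tests. The second clause, ``there exists $(\zbar,\wbar)\in V\cap\s_1^n$ over which the fibre of $V$ has exactly $m$ points'', is a first-order sentence over $\R$: identifying $\C^n$ with $\R^{2n}$ via real and imaginary parts, membership in $\s_1^n$ and in $V$ become polynomial (in)equalities, and ``the fibre over $\zbar$ has exactly $m$ points'' is expressed by quantifying over $m$ candidate fibre points and one further arbitrary point; by Tarski's theorem this sentence is decidable. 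Running the two decision procedures and combining the answers over all irreducible components yields the claimed algorithm.

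The decidability step itself is then a routine appeal to Tarski's theorem and standard computer algebra; the real content -- and the main obstacle -- is the equivalence in the middle paragraph, and within it the converse implication, where one must track the local analytic branches to see that a single point of $V\cap\s_1^n$ sitting in an unramified, $^*$-symmetric fibre forces an entire branch to be $^*$-invariant, so that Theorem~\ref{thm: V = graph alg map} applies.
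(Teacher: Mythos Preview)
Your proof is correct, but it takes a different and more elaborate route than the paper. The paper observes directly (from Corollary~\ref{cor: intersection is large} and the proof of Theorem~\ref{thm: V = graph alg map}) that $V\cap\s_1^n$ is Zariski dense in $V$ if and only if the projection of $V\cap\s_1^n$ to the first $d$ coordinates contains a nonempty open ball of $\s_1^d$; this is written as a single first-order sentence over $\R$ and handed to Tarski. There is no separate treatment of $V=V^*$, no computation of the degree $m$, and no branch-tracking argument.

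Your approach instead proves a new pointwise criterion not stated in the paper: Zariski density is equivalent to $V=V^*$ together with the existence of a \emph{single} point of $V\cap\s_1^n$ lying over an unramified base point. The converse direction of this equivalence --- showing that one $^*$-fixed unramified fibre point forces the local branch through it to satisfy the functional equation of Theorem~\ref{thm: V = graph alg map} --- is a genuine additional argument. What you gain is a sharper and arguably more informative test (an algebraic symmetry condition plus a single witness point), and a cleaner separation between the computer-algebra step ($V=V^*$) and the real-closed-field step; what the paper gains is brevity, since its formula~(3.1) packages everything into one sentence without needing to isolate the $^*$-symmetry or compute $m$.
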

\begin{proof}
    The proof is based on simple ideas from the model theory of the field of real numbers. We do not assume any familiarity with that theory and present an informal proof sketch below. The readers who know the basics of model theory should be able to formalise this sketch easily. Those who do not but are interested are referred to \cite[\S 3.3]{Marker}.

    Let $d:=\dim V$ and assume without loss of generality that the coordinates $z_1,\ldots,z_d$ are algebraically independent on $V$. Then by Corollary \ref{cor: intersection is large}, $V\cap \s_1^n$ is Zariski dense in $V$ if and only if the following holds:
    \begin{equation}\label{eq: zariski dense formula}
         \exists \mathbf{u}\in \s_1^d\, \exists \delta > 0\, \forall \zbar \in \s_1^d \,(|\zbar - \mathbf{u}|< \delta \to \exists \wbar \in \s_1^{n-d}\, (\zbar,\wbar)\in V).
    \end{equation}
    If we interpret $\C$ as $\R^2$ then the unit circle $\s_1$ is definable by a first-order\footnote{Here ``first-order'' refers to first-order logic where formulas are expressions made up of variables, constants (such as 0, 1, $e$, $\pi$), functions (such as $+, \cdot$), relations (such as $<$), quantifiers (i.e. $\forall$ and $\exists$), and logical connectives (e.g. combining two formulas by ``and'' or ``or''). An example of a formula is $\exists x (x^2+e = \pi)$ which holds in the field of real numbers for $\pi > e$.} formula, hence \eqref{eq: zariski dense formula} is expressible as a first-order formula. It is know that the theory of the field of real numbers is decidable, i.e. there is an algorithm which decides whether any given first-order formula holds in $\R$. In particular, that algorithm can be applied to decide whether for a given variety $V$ the formula \eqref{eq: zariski dense formula} holds.
\end{proof}

\begin{remark}\label{rem: nec and suff}
    It would be natural to ask whether there is an explicit geometric characterisation of those algebraic varieties that intersect tori in a Zariski dense subset. There seems to be no such characterisation. To understand this, first note that this is a problem about the field of real numbers. More precisely, we need to understand when a certain system of polynomial equations has a real solution, or a large set of real solutions. While we showed above that this problem is decidable, i.e. for each individual variety we can understand whether it contains a Zariski dense subset of points from the appropriate torus, there seems to be no criterion for this. This is analogous to the following (simplified) problem: is there a criterion for understanding whether a polynomial of a single variable has a real root. Given an arbitrary polynomial $p(x)\in \R[x]$, there is a finite set of polynomial equalities and inequalities which are satisfied by the coefficients of $p$ if and only if it has a real root. So again, the problem is decidable. However, there is no general criterion for understanding whether any given polynomial has a real root. There are some sufficient conditions though. For instance, any polynomial of odd degree with real coefficients has a real root. This is analogous to Proposition \ref{prop: V=V* and odd degree}, and a careful reader would also notice that these facts are both based on the same idea.
\end{remark}

\section{Finding solutions in simple cases}

Consider a plane curve defined by an equation $p(z,w)=0$. As before, we are looking for solutions $(z,w)\in \s_1^2$. We show how to find all solutions in the simple case when the polynomial $p$ has at most three terms. 

First, if $p$ has only one term, everything is trivial. When $p$ has two terms then the equation can be written in the form $z^aw^b=\alpha$ where $\alpha\in \C$ is a constant, $a,b$ are integers. This equation has a solution in $\s_1^2$ if and only if $|\alpha|=1$. If this is the case, then for any $\zeta \in \s_1$ the pair $(\zeta, (\alpha\zeta^{-a})^{\sfrac{1}{b}})$ is a solution (for any choice of the root), and these are all the solutions. 

Now assume $p$ has three terms, i.e. we want to solve the equation 
\begin{equation}\label{eq: x^ay^b+...}
    \xi z^{a_1}w^{b_1}+\eta z^{a_2}w^{b_2}+\zeta z^{a_3}w^{b_3}=0
\end{equation}
in $\s_1^2$. We start with a special case and consider an equation of the form
\begin{equation}\label{eq: x+y=beta}
  z+\alpha w = \beta  
\end{equation}
where $\alpha, \beta \neq 0$ are complex numbers. Observe that if $(z,w)$ satisfies this equation and $|z|=|w|=1$ then $|\beta - z| = |\alpha|$. So $z$ is on the circle of radius $|\alpha|$ centred at $\beta$. Denote this circle by $S(\beta, |\alpha|)$ (the red circle in Figure \ref{fig: circles}) On the other hand, $z$ is on $\s_1$ (blue circle in Figure \ref{fig: circles}). Thus, $z$ must be an intersection point of these two circles. There are three possibilities.

\begin{enumerate}
    \item $|\alpha|+1 < |\beta|$. Then $S(\beta, |\alpha|)\cap \s_1 = \emptyset$ and \eqref{eq: x+y=beta} has no solutions.

    \item $|\alpha|+1 = |\beta|$. Then $S(\beta, |\alpha|)\cap \s_1 $ consists of a single point, giving the value of $z$. This then uniquely determines the value of $w$. Thus \eqref{eq: x+y=beta} has one solution.

    \item $|\alpha|+1 > |\beta|$. Then $S(\beta, |\alpha|)\cap \s_1 $ consists of two points which give two possibilities for the value of $z$, each of which uniquely determines $w$. Hence, \eqref{eq: x+y=beta} has two solutions.
\end{enumerate}

    \begin{figure}\label{fig: circles}
\begin{tikzpicture}[baseline = 0, scale = 2]

\draw[thick,->] (-1.5,0) -- (3.2,0) node[anchor=north] {$\re$};
\draw[thick,->] (0,-1.25) -- (0,1.9) node[anchor=east] {$\im$};

\draw[thick,blue] (0,0) circle (1cm);

\draw[thick,red] (1.5,0.5) circle (1.2cm);

\draw[brown!50] (0,0) circle (1.2cm);

    \draw (0.92,-0.1) node {\tiny {1}};
    
  \draw (-1,0) node [below right] {\tiny {-1}};
  
      \draw (0,0) node [below left] {\tiny {0}};
      \filldraw(0,1) circle  node [below left] {\scriptsize $i$};

      \filldraw (1.5,0.5) circle (0.5pt) node [right] {\tiny {$\beta$}};

      \filldraw (0.38,0.92) circle (0.5pt) node [below left] {\scriptsize {$z$}};

      \draw (1.5, 0.5) -- (1.5-0.38, 0.5-0.92) --  (0,0) -- (0.38,0.92) -- (1.5, 0.5);

       \filldraw (1.5-0.38, 0.5-0.92) circle (0.5pt) node [ right] {\scriptsize {$\alpha w$}};

       \filldraw (-1.15, 0.34) circle (0.5pt) node [ above left] {\tiny {$\alpha$}};

       \filldraw (-0.997, 0.07) circle (0.5pt) node [ above right] {\tiny {$w$}};

\end{tikzpicture}
\caption{Solving the equation $z+\alpha w = \beta$}
\end{figure}
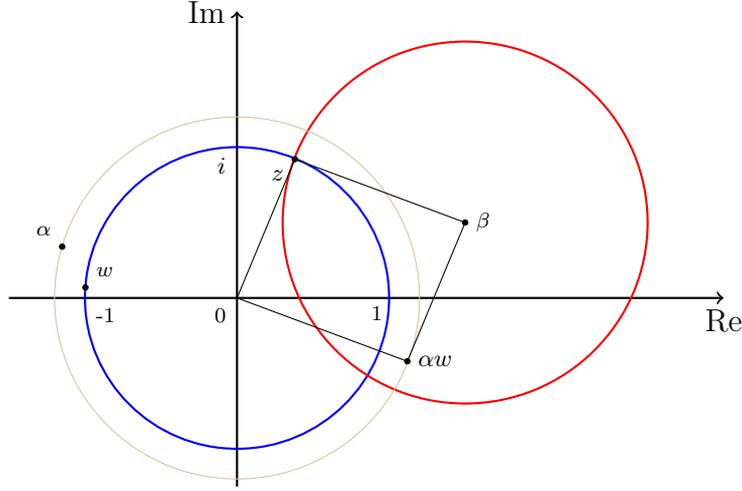 

Now let us turn to equation \eqref{eq: x^ay^b+...}. First we divide by $\zeta z^{a_3}w^{b_3}$ and assume the equation is of the form
\[ z^aw^b + \alpha z^cw^d = \beta. \]
Since $|z|=|w|=1$ implies $|z^aw^b| = |z^cw^d| = 1$, we can apply the above method to find all possible values of $z^aw^b$ and $z^cw^d$. This then is enough to find all possible values of $z$ and $w$. Note that there will be finitely many solutions unless $ad-bc=0$.

\section{Exercises and further problems}\label{sec: problems}

The following exercises and problems will help the reader thoroughly understand the key ideas and arguments in the paper and fill in the gaps in the general picture. Some of the problems are expected to encourage further research around the topics presented here.

\begin{enumerate}

    \item Find all points $z,w\in \s_1$ such that $2z^2w^3+3izw^2=\frac{6}{5}+\frac{23}{5}i$.

    \item \label{prob: C=C* empty} Give an example of a plane curve $C\seq \C^2$ such that $C = C^*$ and $C\cap \s_1^2 = \emptyset$.

    \item Give an example of a rational function mapping $\s_1$ to a proper subset of itself. In other words, give an example of a non-surjective rational function mapping $\s_1$ to itself.

    \item \label{prob: proper arc} Give an example of an irreducible plane curve $C \seq \C^2$ such that $C\cap \s_1^2$ is Zariski dense in $C$ but the projections of $C\cap \s_1^2$ to the first and second coordinates are both proper arcs of $\s_1$.

    \item Show that the set $A:=\left\{ \left(x+i\sqrt{1-x^2}, x^2+i\sqrt{1-x^4}\right)\in \C^2: x\in [0,1] \right\} \seq \s_1^2$ is not Zariski dense in $\C^2$ and find its Zariski closure $C\seq \C^2$. Compare $C\cap \s_1^2$ to $A$.

    \item Let $U\seq \C^n$ be an open subset with $U\cap \s_1^n \neq \emptyset$. Prove that if a holomorphic function $f: U\to \C$ vanishes on $U\cap \s_1^n$ then it vanishes on $U$. \textit{Hint:} Cf. \cite[Proposition 6.10]{Gallinaro-exp-sums-trop}.

    \item \label{prob: anal func} Prove that Lemma \ref{lem: alg map fixing s_1} holds with analytic functions instead of algebraic.

    \item Find all meromorphic functions defined on an open set containing the closed unit disc $\{ z\in \C : |z|\leq 1 \}$ which map $\s_1$ to $\s_1$. \textit{Hint:} Use the maximum modulus principle.

    \item Give an example of a holomorphic function on $\Cx$ which has an essential singularity at $0$ and maps $\s_1$ to $\s_1$. \textit{Hint:} Use Exercise \ref{prob: anal func}.

    \item Show that the set $\left\{ \left( e^{ i t}, e^{i t^2} \right) : t \in [0,1] \right\}\seq \s_1^2$ has real dimension 1 but is Zariski dense in $\C^2$. For any $n\geq 1$ give an example of a subset of $\s_1^n$ of real dimension 1 which is Zariski dense in $\C^n$.

    \item Is it possible to extend Proposition \ref{prop: rational map fixing S_1} to \textit{rational varieties}, i.e. varieties which can be parametrised by rational functions? In particular, is it possible to characterise all curves which can be parametrised as $\{ (f(z), g(z)): z\in \C\setminus P \}$ with $f,g \in \C(z)$, where $P$ is the set of poles of $f$ and $g$, and which contain a Zariski dense subset of $\s_1^2$?
\end{enumerate}

\bibliographystyle{alpha}
\bibliography{ref}

\end{document}